\newcommand{\R}{{\mathbb R}}
\newcommand{\Z}{{\mathbb Z}}
\newcommand{\C}{{\mathbb C}}
\newcommand{\be}{\begin{eqnarray}}
\newcommand{\ben}{\begin{eqnarray*}}
\newcommand{\en}{\end{eqnarray}}
\newcommand{\enn}{\end{eqnarray*}}
\newcommand{\ba}{\backslash}
\newcommand{\pa}{\partial}
\newcommand{\ov}{\overline}
\newcommand{\G}{\Gamma}
\newcommand{\eps}{\epsilon}
\newcommand{\om}{\omega}
\newcommand{\wi}{\widetilde}
\newcommand{\hth}{\hat{\theta}}
\newcommand{\hx}{\hat{x}}
\newtheorem{theorem}{Theorem}[section]
\newtheorem{lemma}[theorem]{Lemma}
\begin{document}
\title{\bf Phaseless inverse source scattering problem: phase retrieval, uniqueness and direct sampling methods}
\author{Xia Ji\thanks{LSEC, Academy of Mathematics and Systems Science, Chinese Academy of Sciences, Beijing 100190, China. Email: jixia@lsec.cc.ac.cn (XJ)},
       \and Xiaodong Liu\thanks{Institute of Applied Mathematics, Academy of Mathematics and Systems Science, Chinese Academy of Sciences, 100190 Beijing, China. Email: xdliu@amt.ac.cn (XL)},
       \and Bo Zhang\thanks{LSEC, NCMIS and Academy of Mathematics and Systems Science, Chinese Academy of Sciences,
Beijing 100190, China and School of Mathematical Sciences, University of Chinese Academy of Sciences,
Beijing 100049, China. Email: b.zhang@amt.ac.cn (BZ)}}
\date{}
\maketitle

\begin{abstract}
Similar to the obstacle or medium scattering problems, an important property of the phaseless far field patterns for source scattering problems is the translation invariance.
Thus it is impossible to reconstruct the location of the underlying sources.
Furthermore, the phaseless far field pattern is also invariant if the source is multiplied by any complex number with modulus one. Therefore, the source can not be uniquely determined, even the multi-frequency phaseless far field patterns are considered.
By adding a reference point source into the model, we propose a simple and stable phase retrieval method and
establish several uniqueness results with phaseless far field data.
We proceed to introduce a novel direct sampling method for shape and location reconstruction of the source by using broadband sparse phaseless data directly.
We also propose a combination method with the novel phase retrieval algorithm and the classical direct sampling methods with phased data.
Numerical examples in two dimensions are also presented to demonstrate their feasibility and effectiveness.

\vspace{.2in} {\bf Keywords:}
Phaseless data; phase retrieval; uniqueness; sampling methods; far field pattern;

\vspace{.2in} {\bf AMS subject classifications:}
35P25, 45Q05, 78A46, 74B05

\end{abstract}

\section{Introduction}
Acoustic source imaging problems play an important role in such diverse areas as antenna synthesis, biomedical imaging, sound source localization, or identification of pollutant in the environment. In the last forty years, the inverse acoustic source scattering problems have attracted more and more attention, and significant progress has
been made on uniqueness \cite{AlaHuLiuSun,BaoLiLinTriki,BleisteinCohen,DevaneyMarengoLi,DevaneySherman,ElbadiaNara,EllerValdivia,Griesmaier,SylvesterKelly}, stability analyses \cite{BaoLiLinTriki,BaoLinTriki,ChengIsakovLu,ElbadiaNara,EllerValdivia,IsakovLu} and numerical approaches \cite{AlaHuLiuSun,BaoLiLinTriki,BaoLinTriki,BaoLuRundellXu,ElbadiaNara,EllerValdivia,Griesmaier,WangGuoZhangLiu2017IP, ZhangGuo}.

All the above works consider the case with phased measurements, where the corresponding inverse problems are linear and higher wave number information may yield increased stability \cite{BaoLiLinTriki,ChengIsakovLu,IsakovLu}. However, in many cases of practical interest, it is very difficult and expensive to obtain the phased data, while the phaseless data is much easier to be achieved. The phaseless inverse source problems become nonlinear and the source function can not be uniquely determined, even multi-frequency data are used.
We refer to \cite{BaoLinTriki2011} for a continuation method for source reconstruction with the multi-frequency phaseless Cauchy measurements.
The accuracy is indeed not comparable to those with phased data, which is reasonable because the phaseless inverse problem is nonlinear.
In a recent work \cite{ZhangGuoLiLiu}, the authors introduce a stable phase retrieval method by adding twenty reference point sources with specially arranged locations into the scattering system.
In this paper, following the idea introduced in our recent work \cite{JiLiuZhang} for phaseless inverse obstacle and medium scattering problems, we introduce a novel phase retrieval
technique by using at most three reference point sources. Some uniqueness results and direct sampling methods are also proposed.
We focus on phaseless far field data, and the case with phaseless scattered fields can be done similarly.

We begin with the formulations of the acoustic source scattering problems.
Let $k=\om/c>0$ be the wave number of a time harmonic wave, where $\om>0$ and $c>0$ denote the frequency and sound speed, respectively.
Fixing two wave numbers $0<k_{min}<k_{max}$, we consider the wave equation with
\be\label{kassumption}
k\in(k_{min}, k_{max}).
\en
Let
\ben
D:= \bigcup_{m=1}^{M}D_{m}\subseteq  \mathbb{R}^{n}
\enn
be an ensemble of finitely many well-separated bounded domains in $ \R^{n}$, $n=2, 3,$ i.e., $\overline{D_{j}}\cap\overline{D_{l}}\ =\emptyset$ for $j\neq l$.
For any fixed $k\in(k_{min},k_{max})$, let $S(\cdot,k)\in L^{2}(\R^n)$ represent the acoustic source with compact support $D$.
Then the  time-harmonic wave  $u_{S}\in H_{loc}^{1}( \mathbb{R}^{n})$ radiated by $S$ solves the Helmholtz equation
\be\label{Helmholtzequation}
\Delta u_{S}(x,k) +k^{2} u_{S}(x,k)= S(x,k) \quad \quad in \; \;  \mathbb{R}^{n}
\en
and satisfies the Sommerfeld radiation condition
\be\label{SRC}
\lim_{r\longrightarrow \infty} r^{\frac{n-1}{2}}\Big(\frac{\partial u_{S}}{\partial r} - iku_{S} \Big) = 0, \quad \quad r=|x|.
\en
From the Sommerfeld radiation condition \eqref{SRC}, it is well known that the scattered field $u_S$ has the following asymptotic behavior
\ben
u_{S}(x, k)= C(k,n)\frac{e^{ik|x|}}{|x|^{\frac{n-1}{2}}} u^{\infty}_{S}(\hx, k)+ {\cal O}(|x|^{-\frac{n+1}{2}}), \quad \hx=\frac{x}{|x|}\in \mathbb{S}^{n-1},\;
\enn
as $|x|\longrightarrow \infty,$ where $C(k,n)=e^{i\pi/4}/\sqrt{8\pi k}$ \;if $n=2$ and $C(k,n)=1/4 \pi$ if $n=3.$
The complex valued function $u^\infty_{S}=u^\infty_{S}(\hx, k)$ defined on the unit sphere $\mathbb{S}^{n-1}$
is known as the far field pattern with $\hx\in \mathbb{S}^{n-1}$ denoting the observation direction.

The radiating solution $u_S$ to the scattering problem \eqref{Helmholtzequation}-\eqref{SRC} takes the form
\be\label{us}
u_{S}(x,k)=\int_{\R^n}\Phi_{k}(x,y)S(y,k)ds(y),\quad x\in\R^{n},
\en
with
\ben\label{Phi}
\Phi_{k}(x,y):=\left\{
              \begin{array}{ll}
                \frac{i}{4}H^{(1)}_0(k|x-y|), & n=2; \\
                \frac{ik}{4\pi}h^{(1)}_0(k|x-y|)=\frac{e^{ik|x-y|}}{4\pi|x-y|}, & n=3,
              \end{array}
            \right.
\enn
being the fundamental solution of the Helmholtz equation.
Here, $H^{(1)}_0$ and $h^{(1)}_0$ are, respectively, Hankel function and spherical Hankel function of the first kind and order zero.
From asymptotic behavior of the Hankel functions, we deduce that the corresponding far field pattern has the form
\be\label{uinf}
u^{\infty}_{S}(\hx,k)=\int_{\R^n}e^{-ik\hx\cdot\,y}S(y,k)dy,\quad \hx\in\,\mathbb{S}^{n-1}.
\en
Both the scattered fields $u_{S}$ in \eqref{us} and the corresponding far field patterns $u^{\infty}_{S}$ given in \eqref{uinf} are complex valued functions.
In many practical applications, only the intensity information of these data are available. Thus the corresponding inverse problems are described as follows:\\

{\bf (IP1)}:{\quad\em Determine the source $S$ from one of the following data sets
\ben
{\rm (1).}\Big\{|u_{S}(x, k)|:\,x\in \G,\,k\in (k_{min},k_{max})\Big\}; \quad {\rm (2).}\Big\{|u^\infty_{S}(\hx, k)|:\,\hx\in \mathbb{S}^{n-1},\,k\in (k_{min},k_{max})\Big\},
\enn
where $\G$ is the measurement surface that contains $D$ in its interior.
}\\

Clearly, the phaseless inverse problem {\bf (IP1)} is nonlinear.
For any fixed $\hth\in \mathbb{S}^{n-1}$, define $S_{\hth}(y,k):=e^{i\hth}S(y,k)$. From the representations \eqref{us} and \eqref{uinf}, we deduce the corresponding scattered field $u_{S_{\hth}}$ and far field pattern $u^{\infty}_{S_{\hth}}$ satisfy
\be\label{rotationinvariance}
u_{S_{\hth}}(x, k)=e^{i\hth}u_{S}(x, k),\, x\in\G\quad\mbox{and}\quad  u^{\infty}_{S_{\hth}}(\hx,k)=e^{i\hth}u^{\infty}_{S}(\hx,k),\,\hx\in \mathbb{S}^{n-1},\,k\in (k_{min},k_{max}).
\en
This implies that the phaseless data are invariant under rotation. Thus the source function $S$ can not be uniquely determined from the above phaseless data. For the case with phaseless far field data, even the location of the support of the source can not be uniquely determined. Actually, for any fixed vector $h\in \R^{n}$, define $S_h(y,k):=S(y-h,k)$. Then the corresponding support is given by $D_{h}:=\{x+h:\;x\in D\}$, which is the translation of $D$ with respect to the vector $h\in \R^{n}$.
Then, from \eqref{uinf}, the corresponding far field pattern is given by
\be\label{translationinvariance}
u^{\infty}_{S_h}(\hx,k)
&=&\int_{D_h}e^{-ik\hx\cdot\,y}S_h(y,k)dy\cr
&=&\int_{D}e^{-ik\hx\cdot\,(x+h)}S(x,k)dx\cr
&=&e^{-ik\hx\cdot\,h}u^{\infty}_{S}(\hx,k), \quad \hx\in \mathbb{S}^{n-1},\,k\in (k_{min},k_{max}).
\en
This implies that the modulus of the far field pattern is invariant under translations.

To find or describe the location of the unknown target, we add reference point sources into the scattering system and take the corresponding phaseless data.
Following the ideas given in our recent work \cite{JiLiuZhang} for phaseless inverse obstacle and medium problems, we then introduce a novel stable phase retrieval method using at most three point sources. We have the freedom to choose the locations of point sources, but use different scattering strengths.

The remaining part of the work is organized as follows. In the next section, we introduce the new phaseless multi-frequency inverse source problem with give point sources. A phase
retrieval method is the proposed. We show that our phase retrieval method is Lipschitz stability with respect to measurement noise. Section \ref{Uni} is devoted to some uniqueness
results on the new phaseless multi-frequency inverse source problem.
In Section \ref{DSMs}, we introduce a direct sampling method for source support reconstruction by using broadband sparse phaseless far field data directly. We also combine the phase retrieval method and the direct sampling method proposed in \cite{AlaHuLiuSun} to determine the source support. We want to strength that all the numerical methods make no explicit use of any a priori information of the source. These algorithms are then verified in Section \ref{NumExamples} by extensive examples in two dimensions.

\section{Phase retrieval method}
\label{Pre}\setcounter{equation}{0}

\subsection{New scattering model with given reference point sources}

Let $z_0\in\R^n\ba\ov{D}$ be a fixed point outside $D$. By adding a point source into the underlying scattering system, $u_{S, z_0}(x,k,\tau):= u_{S}(x,k) + \tau\Phi_k(x,z_0)$ is the unique solution to the problem
\ben
\Delta u_{S, z_0}(x,k,\tau) +k^{2} u_{S, z_0}(x,k,\tau)= S(x,k)-\tau\delta_{z_0} \quad \quad in \; \;  \mathbb{R}^{n}\\
\lim_{r\longrightarrow \infty} r^{\frac{n-1}{2}}\Big(\frac{\partial u_{S,z_0}}{\partial r} - iku_{S,z_0} \Big) = 0, \quad \quad r=|x|,
\enn
where $\tau\in \C$ is the scattering strength of the point source and $\delta_{z_0}=\delta(x-z_0)$ is the Dirac delta function at the point $z_0$. In particular, if $\tau=0$, such a problem is reduced to the scattering problem  \eqref{Helmholtzequation}-\eqref{SRC}.
The corresponding far field pattern $u^{\infty}_{S, z_0}$ takes the form
\be\label{FarFieldDplusz0}
u^{\infty}_{S,z_0}(\hx, k, \tau)=u^{\infty}_{S}(\hx, k)+\tau e^{-ik\hx\cdot z_0}, \quad \hx\in \mathbb{S}^{n-1},\,k\in(k_{min}, k_{max}),\,\tau\in\C.
\en

Straightforward calculations show that the phaseless data $|u_{S, z_0}|$ and $|u^{\infty}_{S,z_0}|$ are also invariant under rotations and the modulus of the far field data $|u^{\infty}_{S,z_0}|$ is invariant under translations. However, since we have the freedom to choose the source point $z_0$ and the scattering strength $\tau$, the inverse problem considered is modified as follows:\\

{\bf (IP2)}:{\quad\em Determine the source $S$ from one of the following data sets
\ben
\Big\{|u_{S,z_0}(x, k,\tau)|:\,x\in \G,\,k\in (k_{min},k_{max}),\,\tau\in\mathcal{T},\,z_0\in \mathcal {Z}\Big\};
\enn
or
\ben
\Big\{|u^\infty_{S,z_0}(\hx, k,\tau)|:\,\hx\in \mathbb{S}^{n-1},\,k\in (k_{min},k_{max}),\,\tau\in\mathcal{T},\,z_0\in \mathcal {Z}\Big\},
\enn
where $\mathcal {T}$ and $\mathcal {Z}$ are the admissible scattering strength set and source point set, respectively.
}\\

With properly chosen $\mathcal {T}$ and $\mathcal {Z}$, we introduce a novel phase retrieval method for the phased data in the next subsection. Unique determination of the source $S$
will be established in Section \ref{Uni}. Some numerical methods for the source support will be studied in Sections \ref{DSMs} and \ref{NumExamples}.

\subsection{Phase retrieval method}
In this subsection, we introduce a novel phase retrieval method to obtain the phased data $u_{S}$ and $u^{\infty}_S$ from the phaseless data $|u_{S,z_0}|$ and $|u^{\infty}_{S,z_0}|$, respectively.
Our method is based on the following geometric result \cite{JiLiuZhang}.

\begin{lemma}\label{phaseretrieval}
Let $z_j:=x_j+iy_j,\,j=1,2,3$ be three different complex numbers such that they are not collinear. Then the complex number $z\in\C$ is uniquely determined by the distances $r_j=|z-z_j|,\,j=1,2,3$.
\end{lemma}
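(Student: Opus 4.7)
The plan is to identify $\C$ with $\R^2$ via $z=x+iy$ and reduce the three distance equations $|z-z_j|=r_j$ to a linear system whose invertibility is exactly controlled by the non-collinearity hypothesis.

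First I would square each constraint and expand, writing
\[
(x-x_j)^2+(y-y_j)^2=r_j^2,\qquad j=1,2,3.
\]
These are three quadratic equations in the unknowns $(x,y)$, but the leading terms $x^2+y^2$ are identical in all of them. So, by subtracting the first equation from the second and from the third, the quadratic part cancels and I am left with the linear system
\[
\begin{pmatrix} 2(x_2-x_1) & 2(y_2-y_1)\\ 2(x_3-x_1) & 2(y_3-y_1)\end{pmatrix}\begin{pmatrix}x\\ y\end{pmatrix}=\begin{pmatrix}(r_1^2-r_2^2)+(|z_2|^2-|z_1|^2)\\ (r_1^2-r_3^2)+(|z_3|^2-|z_1|^2)\end{pmatrix}.
\]

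The second step is to compute the determinant of the coefficient matrix: it equals $4\bigl[(x_2-x_1)(y_3-y_1)-(y_2-y_1)(x_3-x_1)\bigr]$, which is (up to sign) four times the signed area of the triangle with vertices $z_1,z_2,z_3$. By hypothesis these three points are not collinear, so this determinant is nonzero and the $2\times 2$ linear system admits a unique solution $(x,y)$.

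Finally I would note that any $z\in\C$ satisfying the three original distance equations automatically satisfies the two linear equations obtained by subtraction, and hence must coincide with the unique solution of the linear system. This proves uniqueness of $z$. No genuine obstacle arises here; the only point to highlight is the exact correspondence between non-collinearity of $z_1,z_2,z_3$ and non-vanishing of the determinant, which is the geometric content of the lemma.
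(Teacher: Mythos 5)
Your argument is correct and complete: squaring and subtracting the distance equations eliminates the common quadratic term $x^2+y^2$, and the resulting $2\times 2$ system has determinant equal (up to the factor $4$ and sign) to twice the area of the triangle $z_1z_2z_3$, which is nonzero precisely because the points are not collinear; hence any $z$ satisfying all three distance constraints is forced to be the unique solution of that linear system. This is, however, a genuinely different route from the one the paper takes. The paper does not prove the lemma algebraically; it imports it from \cite{JiLiuZhang} and, in its {\bf Phase Retrieval Scheme}, effectively argues geometrically: the circles $\pa B_{r_1}(Z_1)$ and $\pa B_{r_2}(Z_2)$ meet in at most two points $Z_A$ and $Z_B$, which are reflections of each other across the line $Z_1Z_2$ (computed via the law of cosines), and the third distance $r_3$ to a point $Z_3$ off that line distinguishes them. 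The geometric version has the advantage of being directly implementable and of feeding into the Lipschitz stability estimate \eqref{phasestability}, since it tracks explicitly how $Z_A$, $Z_B$ depend on the $r_j$; your linear-algebra version is shorter and makes the role of non-collinearity completely transparent as the non-vanishing of a determinant, and it would also yield stability via the norm of the inverse matrix, though the paper does not phrase it that way. One small stylistic remark: it is worth stating explicitly that you are only proving uniqueness (at most one $z$), which is all the lemma claims; existence is assumed since the $r_j$ arise as actual distances from some $z$.
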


Lemma \ref{phaseretrieval} ensures the uniqueness of the phase reconstruction. Numerically, we have the following phase retrieval scheme \cite{JiLiuZhang}.

\begin{figure}[htbp]
\centering
\includegraphics[width=3in]{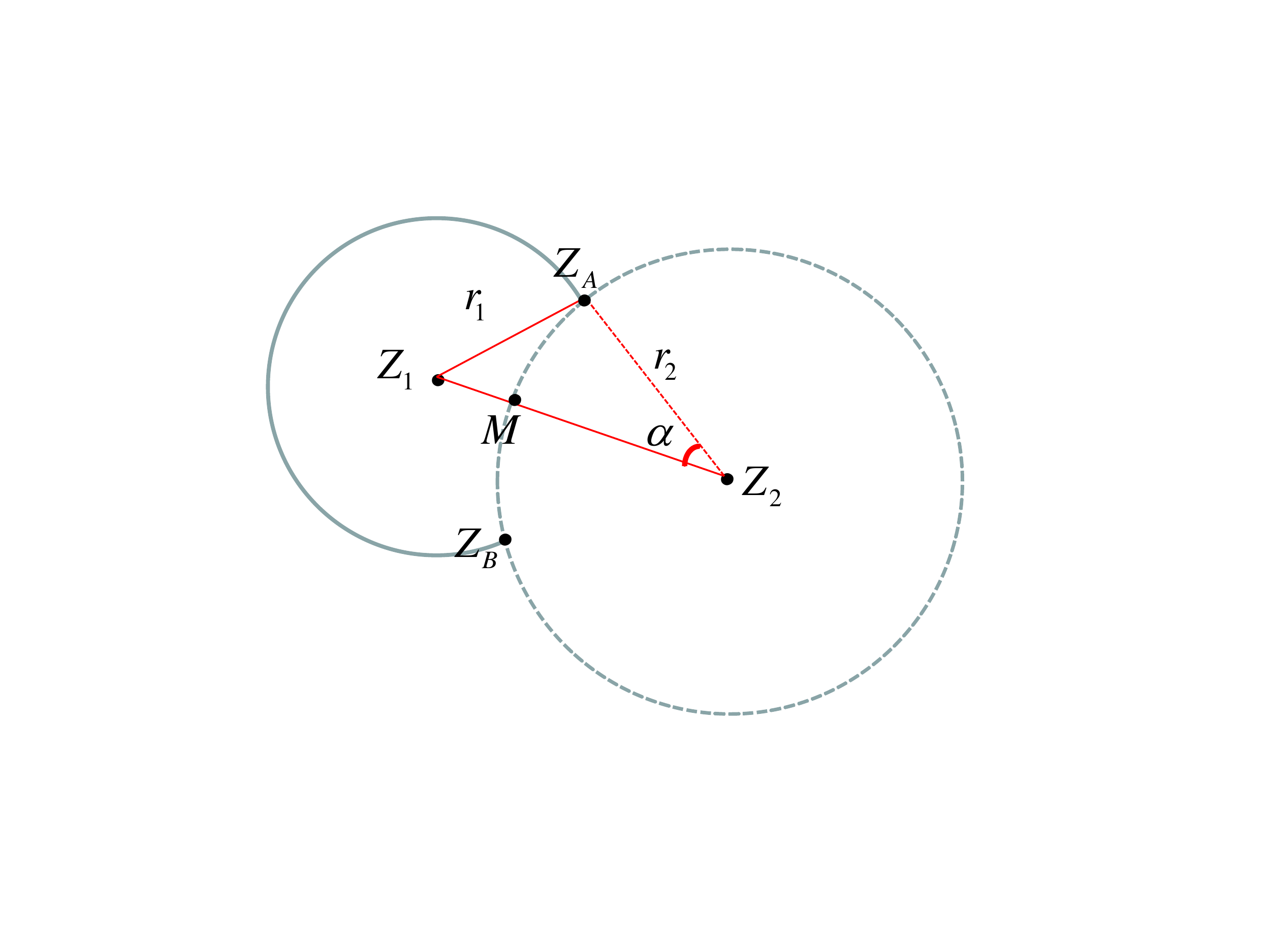}
\caption{Sketch map for phase retrieval scheme. Here, $Z_j=(x_j,y_j)$ is the point in the plane corresponding to the given complex number $z_j$, $j=1,2,3$. Define $Z=(x,y)$ to be the point corresponding to the unknown complex number $z$. Then $Z$ is located on the spheres $\pa B_{r_j}(Z_j)$ centered at $Z_j$ with radius $r_j,\,j=1,2,3$.}
\label{twodisks}
\end{figure}
{\bf Phase Retrieval Scheme. (Numerical simulation for Lemma \ref{phaseretrieval}.)}
\begin{itemize}{\em
  \item (1). {\bf\rm Collect the distances $r_j:=|z-z_j|$ with given complex numbers $z_j,\,j=1,2,3$.} If $r_j=0$ for some $j\in\{1,2,3\}$, then $Z=Z_j$. Otherwise, go to next step.
  \item (2). {\bf\rm Look for the point $M=(x_M, y_M)$.} As shown in Figure \ref{twodisks}, $M$ is the intersection of circle centered at $Z_2$ with radius $r_2$ and the ray $Z_2Z_1$ with initial point $Z_2$. Denote by $d_{1,2}:=|z_1-z_2|$ the distance between $Z_1$ and $Z_2$, then
       \be\label{xMyM}
       x_M=\frac{r_2}{d_{1,2}}x_1+\frac{d_{1,2}-r_2}{d_{1,2}}x_2,\quad y_M=\frac{r_2}{d_{1,2}}y_1+\frac{d_{1,2}-r_2}{d_{1,2}}y_2,
       \en
  \item (3). {\bf\rm Look for the points $Z_A=(x_A, y_A)$ and $Z_B=(x_B, y_B)$.} Note that $Z_A$ and $Z_B$ are just two rotations of $M$ around the point $Z_2$. Let $\alpha\in [0,\pi]$ be the angle between rays $Z_2Z_1$ and $Z_2Z_A$. Then, by the law of cosines, we have
       \be\label{cosalpha}
       \cos \alpha=\frac{r_1^2-r_2^2-d_{1,2}^2}{2r_2d_{1,2}}.
       \en
       Note that $\alpha\in [0,\pi]$ and $\sin^2 \alpha+\cos^2 \alpha=1$, we deduce that $\sin \alpha=\sqrt{1-\cos^2 \alpha}$.
       Then
       \be
       \label{xA}x_A &=& x_2+\Re\{[(x_M-x_2)+i(y_M-y_2)]e^{-i\alpha}\},\\
       \label{yA}y_A &=& y_2+\Im\{[(x_M-x_2)+i(y_M-y_2)]e^{-i\alpha}\},\\
       \label{xB}x_B &=& x_2+\Re\{[(x_M-x_2)+i(y_M-y_2)]e^{i\alpha}\},\\
       \label{yB}y_B &=& y_2+\Im\{[(x_M-x_2)+i(y_M-y_2)]e^{i\alpha}\}.
       \en
  \item (4). {\bf\rm Determine the point $Z$. $Z=Z_A$ if the distance $|Z_AZ_3|=r_3$, or else $Z=Z_B$.}}
\end{itemize}

Note that the measurements are always practically polluted by unavoidable errors. Thus we wish to approximate the phased data $z$ from a knowledge of the perturbed phaseless data
$r_j^{\eps}$ with a known error level
\ben
|r_j^{\eps}-r_j|\leq\eps, \quad j=1,2,3.
\enn
Here and throughout the paper, we use the right upper corner sign $\eps$ to denote the polluted data. From \eqref{xMyM}, we deduce that
\be\label{xMyMestimate}
|x_M^{\eps}-x_M|=\frac{|x_1-x_2|}{d_{1,2}}|r_2^{\eps}-r_2|\leq \eps\quad\mbox{and}\quad |y_M^{\eps}-y_M|=\frac{|y_1-y_2|}{d_{1,2}}|r_2^{\eps}-r_2|\leq \eps.
\en
Similarly, \eqref{cosalpha} implies the existence of a constant $c_1>0$ depending on $Z_j, j=1,2,3$, such that
\ben
|e^{i\alpha^{\eps}}-e^{i\alpha}|\leq c_1\eps.
\enn
Combining this with \eqref{xMyMestimate} and \eqref{xA}-\eqref{yB}, we find that
there exists a constant $c_2>0$ depending on $Z_j, j=1,2,3$, such that
\ben
|x_{ii}^{\eps}-x_{ii}|\leq c_2\eps\quad\mbox{and}\quad |y_{ii}^{\eps}-y_{ii}|\leq c_2\eps,\quad ii=A,B.
\enn
Therefore, we have
\be\label{phasestability}
|Z^{\eps}-Z|\leq \sqrt{2}c_2\eps.
\en
This implies that our phase retrieval scheme is Lipschitz stable with respect to the measurement noise level $\eps$.

For any fixed $z_0\in\R^n\ba\ov{D}$, let $\tau_j\in\C, j=1,2,3$ be three scattering strengths with different principal arguments. Choose
\ben
\mathcal {Z}:=\{z_0\},\,\mathcal {T}:=\{\tau_1, \tau_2, \tau_3\}.
\enn
For the case with phaseless scattered field data, we set
\ben
z_j:=-\tau_j\Phi_k(x,z_0)\quad\mbox{and}\quad r_j:=|u_{S,z_0}(x,k,\tau_j)|, \quad x\in\G, \,k\in (k_{min}, k_{max}),\, j=1,2,3.
\enn
For the case with phaseless far field data, we set
 \ben
z_j:=-\tau_j e^{-ik\hx\cdot z_0}\quad\mbox{and}\quad r_j:=|u^{\infty}_{S,z_0}(\hx,k,\tau_j)|, \quad \hx\in \mathbb{S}^{n-1}, \,k\in (k_{min}, k_{max}),\, j=1,2,3.
\enn
Denote by $Z_j=(x_j,y_j)$ the three points in the plane corresponding to the three given different complex numbers $z_j,\,j=1,2,3$.
By \eqref{phasestability}, we have the following stability result.

\begin{theorem}\label{phaseretrieval-stability}
For any fixed $z_0\in\R^n\ba\ov{D}$, let $\tau_j\in\C, j=1,2,3$ be three scattering strengths with different principal arguments. Assume that we have the measured phaseless data $|u^{\eps}_{S, z_0}|$ or $|u^{\infty,\eps}_{S,z_0}|$ with
\ben
\Big||u^{\eps}_{S, z_0}(x,k,\tau_j)|-|u_{S, z_0}(x,k,\tau_j)|\Big|\leq\eps, \,x\in\G,\,k\in (k_{min}, k_{max}),\,j=1,2,3,
\enn
or
\ben
\Big||u^{\infty,\eps}_{S, z_0}(\hx,k,\tau_j)|-|u^{\infty}_{S, z_0}(\hx,k,\tau_j)|\Big|\leq\eps, \,\hx\in \mathbb{S}^{n-1},\,k\in (k_{min}, k_{max}),\,j=1,2,3.
\enn
Then we have
\ben
\Big|u^{\eps}_{S}(x,k)-u_{S}(x,k)|\Big|\leq C\eps,\,x\in\G,\,k\in (k_{min}, k_{max}),
\enn
or
\ben
\left|u^{\infty,\eps}_{S}(\hx,k)-u^{\infty}_{S}(\hx,k)\right|\leq C\eps, \,\hx\in \mathbb{S}^{n-1},\,k\in (k_{min}, k_{max})
\enn
for some constant $C>0$ depending only on $\tau_j, j=1,2,3$.
\end{theorem}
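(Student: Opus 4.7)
The plan is to reduce the theorem to a pointwise application (in $(x,k)$ or $(\hx,k)$) of the Phase Retrieval Scheme, combined with the Lipschitz estimate \eqref{phasestability} just derived. The identification of the ingredients of Lemma \ref{phaseretrieval} with quantities of the scattering model has essentially been carried out in the paragraph preceding the theorem; the remaining work is to read off the resulting bound and to check that the constant is indeed uniform in the angular and frequency variables.

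First I fix $(\hx,k)\in\mathbb{S}^{n-1}\times(k_{min},k_{max})$; the near-field case $x\in\G$ is handled identically. Using \eqref{FarFieldDplusz0}, set $z:=u^\infty_S(\hx,k)$, $z_j:=-\tau_j e^{-ik\hx\cdot z_0}$, and $r_j:=|u^\infty_{S,z_0}(\hx,k,\tau_j)|$. Then $r_j=|z-z_j|$, so the measured moduli provide exactly the three distances required by Lemma \ref{phaseretrieval}. Since $|e^{-ik\hx\cdot z_0}|=1$, the triple $\{z_1,z_2,z_3\}$ is a rigid rotation of $\{-\tau_1,-\tau_2,-\tau_3\}$, so non-collinearity (needed for Lemma \ref{phaseretrieval} to apply) is equivalent to non-collinearity of the $\tau_j$, which is to be guaranteed by the assumption of pairwise distinct principal arguments. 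Applying the Phase Retrieval Scheme to the perturbed distances $r_j^\eps$ produces an approximation $z^\eps$ of $z$; the hypothesis gives $|r_j^\eps-r_j|\leq\eps$, and \eqref{phasestability} then yields $|z^\eps-z|\leq\sqrt{2}c_2\eps$, which is the claimed bound with $C:=\sqrt{2}c_2$.

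The step that would be the main obstacle is justifying that $C$ depends only on $\tau_1,\tau_2,\tau_3$ and not on the free parameters $\hx$, $k$ and $z_0$. An inspection of the chain \eqref{xMyMestimate}--\eqref{phasestability} shows that the constants $c_1,c_2$ are built only from the ratios $|x_1-x_2|/d_{1,2}$, $|y_1-y_2|/d_{1,2}$ and from the sensitivity of $\cos\alpha$, all of which are invariants of the triangle $Z_1Z_2Z_3$. In the far-field case this triangle is congruent (via the rotation by $-e^{-ik\hx\cdot z_0}$) to the triangle with vertices $-\tau_1,-\tau_2,-\tau_3$, so these invariants depend only on the $\tau_j$. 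For the near-field case the same argument applies with the unit-modulus factor replaced by $-\Phi_k(x,z_0)$; the triangle of the $z_j$ is then a similarity copy of that of $\{-\tau_j\}$, and the bound follows with a constant that may absorb an additional factor involving $|\Phi_k(x,z_0)|$ on $\G$. Once uniformity of $C$ has been checked pointwise, the stated estimate holds on all of $\mathbb{S}^{n-1}\times(k_{min},k_{max})$ or $\G\times(k_{min},k_{max})$, completing the argument.
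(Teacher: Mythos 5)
Your proposal coincides with the paper's own argument: the paper proves Theorem \ref{phaseretrieval-stability} precisely by making the identifications $z=u^{\infty}_{S}(\hx,k)$ (resp.\ $u_{S}(x,k)$), $z_j=-\tau_j e^{-ik\hx\cdot z_0}$ (resp.\ $-\tau_j\Phi_k(x,z_0)$) and $r_j=|u^{\infty}_{S,z_0}(\hx,k,\tau_j)|$ in the paragraph preceding the statement and then invoking the Lipschitz estimate \eqref{phasestability} pointwise, exactly as you do. Your additional scrutiny of the constant is not recorded in the paper but is consistent with it, and your remark that in the near-field case the triangle of the $z_j$ is only a similarity copy of $\{-\tau_1,-\tau_2,-\tau_3\}$ scaled by $|\Phi_k(x,z_0)|$ (which is not of unit modulus) correctly flags the one place where the claim that $C$ depends only on the $\tau_j$ needs qualification.
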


\section{Uniqueness}
\label{Uni}

In this section, we study the uniqueness for the inverse problem {\bf (IP2)}, i.e., we investigate the question whether the phaseless data provides enough information to determine the nature of the scatterer $S$ completely.  We assume that the source $S(y,k)$ is a product of a spatial function $f$ and a frequency function $g$, i.e.,
\be\label{Sfg}
S(y,k)=f(y)g(k), \quad y\in \R^n, k\in (k_{min},k_{max}).
\en
Here, $g\in C(k_{min},k_{max})$ is a given nontrivial function of $k$. Thus, there exists an interval $(k_a,k_b)\subset(k_{min},k_{max})$
such that
\be\label{gassumption}
g(k)\neq 0, \quad k\in (k_a,k_b).
\en
The following theorem establish the uniqueness of the source with phased data. We refer to \cite{BaoLiLinTriki,WangGuoZhangLiu2017IP} for the special case with $g(k)\equiv 1$, i.e., the source $S$ is independent of the wave number $k$.

\begin{theorem}\label{uni-full}
Then the source $S$ is uniquely determined by one of the following phased data set:
\ben
(1).\, \Big\{u_{S}(x, k):\,x\in \G,\,k\in (k_{min},k_{max})\Big\}, \quad (2).\, \Big\{u^{\infty}_{S}(\hx, k):\,\hx\in \mathbb{S}^{n-1},\,k\in (k_{min},k_{max})\Big\}.
\enn
\end{theorem}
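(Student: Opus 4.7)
\noindent\textbf{Proof plan for Theorem \ref{uni-full}.} The plan is a standard two-step uniqueness argument: reduce the near-field problem to the far-field problem, and then exploit analyticity of the Fourier transform of a compactly supported function.

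Suppose two sources $S_j(y,k)=f_j(y)g(k)$, $j=1,2$, with compactly supported spatial parts $f_j$, produce identical data. Set $f:=f_1-f_2$, which is still compactly supported in some bounded region, and let $u:=u_{S_1}-u_{S_2}$, which is a radiating solution of $\Delta u+k^2u=f(x)g(k)$ in $\R^n$. For the far-field data set (2), from \eqref{uinf} we obtain, for all $\hx\in\mathbb{S}^{n-1}$ and all $k\in(k_{min},k_{max})$,
\ben
0=u^{\infty}_{S_1}(\hx,k)-u^{\infty}_{S_2}(\hx,k)=g(k)\int_{\R^n}e^{-ik\hx\cdot y}f(y)\,dy=g(k)\,\widehat{f}(k\hx).
\enn
Restricting to the subinterval $(k_a,k_b)$ on which $g(k)\neq 0$ by \eqref{gassumption}, we conclude that $\widehat{f}$ vanishes on the open annulus $A:=\{\xi\in\R^n:\,k_a<|\xi|<k_b\}$. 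Since $f$ has compact support, $\widehat{f}$ is an entire function of exponential type on $\C^n$, hence real-analytic on $\R^n$. Vanishing on the open set $A$ forces $\widehat{f}\equiv 0$ on $\R^n$, and therefore $f\equiv 0$; this yields $f_1\equiv f_2$, and combined with the fixed $g$ we get $S_1\equiv S_2$.

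For the near-field data set (1), I would first reduce to the far-field case. Since $\G$ encloses the (common) support of the difference source, the function $u(\cdot,k)$ solves the homogeneous Helmholtz equation in the exterior of $\G$ and satisfies the Sommerfeld radiation condition \eqref{SRC}, together with the Dirichlet condition $u|_{\G}=0$. Uniqueness for the exterior Dirichlet problem for the Helmholtz equation (a standard consequence of Rellich's lemma and the radiation condition, applicable for every $k>0$ without eigenvalue restrictions in the exterior setting) yields $u(\cdot,k)\equiv 0$ in the unbounded component of $\R^n\setminus\G$. By unique continuation, $u$ vanishes on all of $\R^n\setminus\overline{D_1\cup D_2}$, and hence the associated far field pattern is identically zero. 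The previous paragraph then forces $S_1\equiv S_2$.

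The step I expect to require the most care is the reduction from near-field to far-field data, specifically justifying that the radiating solution $u$ of the homogeneous Helmholtz equation with vanishing Dirichlet trace on $\G$ is globally zero outside $D$; one has to be careful that the uniqueness statement is applied in the correct exterior domain and then extended by analytic/unique continuation past $\G$ down to $\partial(D_1\cup D_2)$. The analytic step, by contrast, is clean: once $\widehat{f}$ vanishes on an open set, Paley--Wiener gives the conclusion immediately, so no estimate on $g$ beyond non-vanishing on a subinterval is needed.
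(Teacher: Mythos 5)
Your proposal is correct and follows essentially the same route as the paper: restrict to the subinterval where $g\neq 0$, observe that the far field data determine the Fourier transform of the spatial part on an open annulus, invoke analyticity (Paley--Wiener) to extend to all of $\R^n$, and conclude by Fourier inversion; the near-field case is reduced to the far-field case via Rellich's lemma and unique continuation, exactly as the paper remarks. Your write-up merely phrases the argument in terms of the difference $f_1-f_2$ and spells out the near-field reduction in more detail than the paper's one-line remark.
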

Note that by Rellich's lemma and unique continuation principle, the above mentioned two data sets are actually equivalent. In the sequel, we consider only the phaseless far field data for the inverse problem {\bf (IP2)}. The uniqueness results with phaseless scattered data can be proven quite similarly.
\begin{proof}
Recall the representation \eqref{uinf} of the far field pattern, we have
\ben
u^{\infty}_{S}(\hx,k)=\int_{\R^n}e^{-ik\hx\cdot\,y}f(y)g(k)dy,\quad \hx\in\,\mathbb{S}^{n-1}, k\in (k_{min},k_{max}).
\enn
Note that $g(k)\neq 0$ for $k\in (k_a,k_b)$, define $\xi:=k\hx$, we obtain the data
\ben
\wi{f}(\xi):=\int_{\R^n}e^{-i\xi\cdot\,y}f(y)dy, \quad \xi\in B_{k_b}\ba\ov{B_{k_a}}.
\enn
Here, $B_r$ is the ball with radius $r$ centered at the origin. Clearly, $\wi{f}$ is an analytic function on $\xi$, thus we actually obtain the following data
\ben
\wi{f}(\xi):=\int_{\R^n}e^{-i\xi\cdot\,y}f(y)dy, \quad \xi\in \R^n,
\enn
which is the Fourier transform of the function $f$. Thus, by the Fourier integral theorem, the spacial function $f$ is uniquely determined.
\end{proof}

\begin{theorem}\label{uni-phaseless012}
Let $\tau:=|\tau|e^{i\alpha}\in\C\ba\{0\}$ with the principal argument $\alpha\in [0,2\pi)$, and let $z_0$ and $z_1$ be two different points in $\R^n\ba\ov{D}$.
Then the source $S$ is uniquely determined by the phaseless far field data set
$\big\{|u^{\infty}_{S, p}(\hx, k, \tau)|:\,\hx \in \mathbb{S}^{n-1},\,k\in (k_{min}, k_{max}),\, \tau\in \mathcal \{0, \tau_1\}, p\in \{z_0,z_1\}\big\}$.
\end{theorem}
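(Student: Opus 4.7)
The plan is to reduce the phaseless problem to the phased-data uniqueness of Theorem~\ref{uni-full} by applying Lemma~\ref{phaseretrieval} pointwise in $(\hx, k)$. Reading the data-set description as saying that we have, for every $(\hx,k)$, the moduli corresponding to $(\tau,p)\in\{0,\tau_1\}\times\{z_0,z_1\}$, formula \eqref{FarFieldDplusz0} produces the three independent quantities
$$r_0 := |u^\infty_S(\hx, k)|,\quad r_1 := |u^\infty_S(\hx, k) + \tau_1 e^{-ik\hx \cdot z_0}|,\quad r_2 := |u^\infty_S(\hx, k) + \tau_1 e^{-ik\hx \cdot z_1}|.$$
These are exactly the distances from the unknown complex number $u^\infty_S(\hx, k)$ to the three reference points $\zeta_0:=0$, $\zeta_1:=-\tau_1 e^{-ik\hx \cdot z_0}$, and $\zeta_2:=-\tau_1 e^{-ik\hx \cdot z_1}$. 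By Lemma~\ref{phaseretrieval}, $u^\infty_S(\hx, k)$ is uniquely recovered from $(r_0,r_1,r_2)$ provided these three reference points are non-collinear.

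Next, I would verify that non-collinearity holds off a negligible exceptional set. Since $\zeta_0=0$, collinearity of $\zeta_0,\zeta_1,\zeta_2$ amounts to $\zeta_1/\zeta_2 = e^{ik\hx\cdot(z_1-z_0)}\in\R$, that is, $k\hx\cdot(z_1-z_0)\in\pi\Z$. Because $z_0\neq z_1$, the set
$$E := \bigcup_{m\in\Z}\{(\hx, k)\in\mathbb{S}^{n-1}\times(k_{min},k_{max}) : k\hx\cdot(z_1-z_0)=m\pi\}$$
is a countable union of real-analytic hypersurfaces in $\mathbb{S}^{n-1}\times(k_{min},k_{max})$; in particular it is closed with empty interior, so its complement $E^c$ is open and dense.

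Now assume that two sources $S^{(1)}$ and $S^{(2)}$, both of the form \eqref{Sfg}, generate identical phaseless data. Then their far field patterns lie at equal distances from each $\zeta_j$ at every $(\hx,k)$, and so Lemma~\ref{phaseretrieval} forces $u^\infty_{S^{(1)}}(\hx, k) = u^\infty_{S^{(2)}}(\hx, k)$ on the dense open set $E^c$. The continuity (indeed real-analyticity) of the far field pattern in $(\hx,k)$, evident from \eqref{uinf}, then extends this equality to the whole of $\mathbb{S}^{n-1}\times(k_{min},k_{max})$. A direct application of Theorem~\ref{uni-full} to the thus-recovered phased far field data yields $S^{(1)}=S^{(2)}$.

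The main obstacle is rather mild: the thin collinear set $E$ on which Lemma~\ref{phaseretrieval} cannot be invoked, which is circumvented here by density together with continuity of $u_S^\infty$. The hypothesis $z_0\neq z_1$ is precisely what keeps $E$ thin; if $z_0=z_1$ were allowed, the points $\zeta_1$ and $\zeta_2$ would coincide and one would be left with a two-circle intersection problem carrying the familiar reflective ambiguity.
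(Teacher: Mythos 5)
Your proposal is correct, but it follows a different route from the paper's own proof of this theorem. The paper argues directly with phases: from the expansion of $|u^{\infty}_{S,p}|^2$ it recovers $\Re\big(u^{\infty}_{S}(\hx,k)\,\ov{\tau_1 e^{-ik\hx\cdot p}}\big)$ for $p\in\{z_0,z_1\}$, writes $u^{\infty}_{S}=|u^{\infty}_{S}|e^{i\phi}$, and rules out the reflection ambiguity $\phi_1-(\alpha-k\hx\cdot p)=-[\phi_2-(\alpha-k\hx\cdot p)]+2l\pi$ by observing that its consequence depends on $p$ while $\phi_1+\phi_2$ does not; it then uses analytic continuation from the set where $u^{\infty}_S\neq 0$ and $\hx\cdot(z_0-z_1)\neq 0$. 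You instead package the same three moduli as distances to the reference points $0$, $-\tau_1 e^{-ik\hx\cdot z_0}$, $-\tau_1 e^{-ik\hx\cdot z_1}$ and invoke Lemma~\ref{phaseretrieval} directly --- which is precisely the strategy the paper reserves for Theorem~\ref{uni-phaseless123}, transplanted from three strengths at one source point to one strength at two source points. Your version buys two small improvements: you need not exclude the zero set of $u^{\infty}_S$ (trilateration determines the point even when it is the origin), and your exceptional set $k\hx\cdot(z_1-z_0)\in\pi\Z$ is the sharp one --- the paper's condition $\hx\cdot(z_0-z_1)\neq 0$ alone does not quite kill the reflection case, since the integer $l$ in \eqref{phiplus} may depend on $p$ and the genuine obstruction is $k\hx\cdot(z_0-z_1)\in\pi\Z$. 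One cosmetic point: since $k$ ranges over a bounded interval, only finitely many $m$ contribute to your set $E$, so it is indeed closed with empty interior (for a genuinely countable union you would instead appeal to Baire to get density of the complement, which still suffices for your continuity argument). Both proofs conclude identically via Theorem~\ref{uni-full}.
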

\begin{proof}
For all $\hx \in \mathbb{S}^{n-1},\,k\in (k_{min}, k_{max})$ and $p\in \{z_0,z_1\}$, we have
\ben
|u^{\infty}_{S, p}(\hx, k, \tau)|^2
&=&|u^{\infty}_{S}(\hx, k)+\tau e^{-ik\hx\cdot p}|^2\cr
&=&|u^{\infty}_{S}(\hx, k)|^2+2\Re\big(u^{\infty}_{S}(\hx, \hth)\ov{\tau e^{-ik\hx\cdot p}}\big)+|\tau|^2.
\enn
This implies $\Re\big(u^{\infty}_{S}(\hx, \hth)\ov{\tau e^{-ik\hx\cdot p}}\big)$ can be uniquely determined for all $\hx \in \mathbb{S}^{n-1},\,k\in (k_{min}, k_{max})$ and all $p\in \{z_0,z_1\}$. We rewrite $u^{\infty}_{S}(\hx, k)$ and $\tau e^{-ik\hx\cdot p}$ in the form
\ben
u^{\infty}_{S}(\hx, k)=|u^{\infty}_{S}(\hx, k)|e^{i\phi(\hx,k)} \quad\mbox{and}\quad \tau e^{-ik\hx\cdot p}=|\tau|e^{i(\alpha-k\hx\cdot p)},
\enn
respectively, for all $\hx \in \mathbb{S}^{n-1},\,k\in (k_{min}, k_{max})$ and all $p\in \{z_0,z_1\}$.
Here, $\phi(\hx,k)\in [0,2\pi)$ is the principal argument of $u^{\infty}_{S}(\hx, k)$.
We claim that the argument $\phi(\hx,k)$ of $u^{\infty}_{S}(\hx, k)$ is uniquely determined.
Indeed, assume that there are two arguments $\phi_1(\hx,k)$ and $\phi_2(\hx,k)$.
Define
\ben
\mathbb{S}^{n-1}_0(k,z_0,z_1):=\{\hx\in \mathbb{S}^{n-1}:\, u_{S}^{\infty}(\hx,k)\neq 0\quad\,\mbox{and}\quad\, \hx\cdot(z_0-z_1)\neq 0\}.
\enn
Since $|u^{\infty}_{S}(\hx, k)|$ is given in the data set and $\Re\big(u^{\infty}_{S}(\hx, \hth)\ov{\tau e^{-ik\hx\cdot p}}\big)$ can be obtained for all $\hx \in \mathbb{S}^{n-1},\,k\in (k_{min}, k_{max})$ and all $p\in \{z_0,z_1\}$,
we have that for all $\hx \in \mathbb{S}^{n-1}_0(k,z_0,z_1),\,k\in (k_{min}, k_{max})$ and all $p\in \{z_0,z_1\}$,
\ben
\cos[\phi_1(\hx,k)-(\alpha-k\hx\cdot p)]=\cos[\phi_2(\hx,k)-(\alpha-k\hx\cdot p)],
\enn
and furthermore, we conclude that
\be\label{case1}
\phi_1(\hx,k)-(\alpha-k\hx\cdot p)=\phi_2(\hx,k)-(\alpha-k\hx\cdot p)+2m\pi,\quad \mbox{for\,\,some}\, m\in\Z,
\en
or
\be\label{case2}
\phi_1(\hx,k)-(\alpha-k\hx\cdot p)=-[\phi_2(\hx,k)-(\alpha-k\hx\cdot p)]+2l\pi,\quad\mbox{for\,\,some}\, l\in\Z.
\en

We now show that the case \eqref{case2} does not hold. Actually, \eqref{case2} implies that
\be\label{phiplus}
\phi_1(\hx,k)+\phi_2(\hx,k)-2l\pi=2[\alpha-k\hx\cdot p],\quad p\in \{z_0, z_1\}, \,\mbox{for\,\,some}\,l\in\Z.
\en
The left side of \eqref{phiplus} is independent of $p$. However, the right side of \eqref{phiplus} changes for different $p\in \{z_0, z_1\}$. This leads to a contradiction, and thus \eqref{case2} does not hold.

For the case when \eqref{case1} holds, we have
\ben
\phi_1(\hx,k)-\phi_2(\hx,k)=2m\pi,\quad \mbox{for\,\,some}\,m\in\Z.
\enn
Noting that $\phi_1(\hx,k),\phi_2(\hx,k)\in [0,2\pi)$, we have $\phi_1(\hx,k)-\phi_2(\hx,k)\in (-2\pi,2\pi)$, and thus $m=0$, i.e.,
\ben
\phi_1(\hx,k)=\phi_2(\hx,k), \quad\forall\,\hx \in \mathbb{S}^{n-1}_0(k,z_0,z_1),\, k\in (k_{min}, k_{max}).
\enn
This further implies that $u^{\infty}_{S}(\hx, k)$ is uniquely determined for all $\hx \in \mathbb{S}^{n-1}_0(k,z_0,z_1),\,k\in (k_{min}, k_{max})$ and also for
$\hx \in \mathbb{S}^{n-1},\,k\in (k_{min}, k_{max})$ by analytic continuation. The proof is now completed by applying Theorem \ref{uni-full}.
\end{proof}

Theorem \ref{uni-phaseless01} below shows that, under further assumption on the source, uniqueness can even be established by using much less data.

\begin{theorem}\label{uni-phaseless01}
Let the source $S$ take the form \eqref{Sfg} with a spatial function $f$ and a given real valued continuous function $g$ satisfying \eqref{gassumption}.
In $\R^3$, we further assume that
\be\label{Rs>0}
\Re(f(y))\geq0 \quad\mbox{and}\quad \exists\, y_0\in\R^3\,\, s.t.\,\, \Re(f(y_0))>0.
\en
In $\R^2$, we further assume that
\be\label{is>0}
\Im(f(y)\geq0 \quad\mbox{and}\quad \exists\, y_0\in\R^2\,\, s.t.\,\, \Im(f(y_0))>0.
\en
Then, for any fixed $\tau_1\in\R\ba\{0\}$, the source $S$ is uniquely determined by the phaseless data set
$\big\{|u^{\infty}_{S, z_0}(\hx, k, \tau)|:\,\hx \in \mathbb{S}^{n-1},\,k\in (k_{min}, k_{max}),\, \tau\in \{0, \tau_1\}\big\}$.
\end{theorem}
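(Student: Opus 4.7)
The plan is to argue by contradiction. Suppose $S_1=f_1g$ and $S_2=f_2g$ are two sources satisfying the dimension-specific positivity hypothesis and producing the same phaseless data; the goal is to show $f_1\equiv f_2$. The key auxiliary quantity is the twisted far field
\ben
v_j(\hx,k):=u^{\infty}_{S_j}(\hx,k)\,e^{ik\hx\cdot z_0},\qquad j=1,2,
\enn
which absorbs the phase of the reference point source at $z_0$.

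First I would extract two pieces of information from the data. The $\tau=0$ datum gives $|v_1|=|v_2|$ directly. Expanding
\ben
|u^{\infty}_{S_j}(\hx,k)+\tau_1e^{-ik\hx\cdot z_0}|^2=|u^{\infty}_{S_j}(\hx,k)|^2+2\tau_1\,\Re v_j(\hx,k)+\tau_1^2
\enn
and using $\tau_1\in\R\ba\{0\}$ yields $\Re v_1=\Re v_2$, so $(\Im v_1)^2=(\Im v_2)^2$ pointwise. Since each $f_j$ has compact support, $\wi{f}_j$ is entire on $\C^n$, and hence $v_j$ is real-analytic in $(\hx,k)$ on the connected parameter domain where $g$ does not vanish. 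The real-analytic identity $(\Im v_1-\Im v_2)(\Im v_1+\Im v_2)\equiv 0$ then forces one of the two factors to vanish identically, producing a \emph{global} dichotomy: either $v_1\equiv v_2$ or $v_2\equiv\ov{v_1}$. In the first case $u^{\infty}_{S_1}\equiv u^{\infty}_{S_2}$, and Theorem~\ref{uni-full} immediately gives $S_1=S_2$.

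The crux is ruling out the reflection alternative $v_2\equiv\ov{v_1}$, equivalently $u^{\infty}_{S_2}(\hx,k)=e^{-2ik\hx\cdot z_0}\,\ov{u^{\infty}_{S_1}(\hx,k)}$. Using the reality of $g$ and the change of variables $u=2z_0-y$ under the Fourier integral representation, this identity reads $\wi{f}_2(\xi)=\wi{h}(\xi)$ with $h(y):=\ov{f_1(2z_0-y)}$, initially on the annular set $\{k\hx:\hx\in\mathbb{S}^{n-1},\,k\in(k_a,k_b)\}$. Both sides being restrictions of entire functions on $\C^n$, analytic continuation promotes the equality to all of $\R^n$, and Fourier inversion delivers the pointwise spatial identity $f_2(y)=\ov{f_1(2z_0-y)}$. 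The dimension-dependent positivity then yields a contradiction: in $\R^2$, $\Im f_2(y)=-\Im f_1(2z_0-y)\le 0$ combined with $\Im f_j\ge 0$ forces $\Im f_1\equiv 0$, contradicting $\Im f_1(y_0)>0$; the $\R^3$ case is handled by the analogous sign-reversal argument built around the positivity hypothesis on $\Re f$. The main obstacle is precisely this final step: one must first upgrade the pointwise square-root ambiguity to a rigid global reflection identity before the positivity assumption can be activated to exclude it.
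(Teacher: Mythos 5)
Your strategy is essentially the paper's: pass to the twisted far field $v_j=u^{\infty}_{S_j}e^{ik\hx\cdot z_0}$ (which the paper realizes as the far field of the translated source $S_{-z_0}$), read off $|v_j|$ from the $\tau=0$ data and $\Re v_j$ from the real-$\tau_1$ data, globalize the pointwise sign ambiguity in $\Im v_j$ by analyticity, and exclude the conjugate branch using the sign hypothesis on $f$. Your globalization is in fact tidier than the paper's: factoring $(\Im v_1-\Im v_2)(\Im v_1+\Im v_2)\equiv 0$ and using real-analyticity on a connected domain avoids the paper's manipulation of the discontinuous, $[0,2\pi)$-valued argument function $\phi$. (Minor point: $g$ is only continuous, so $v_j$ itself is not real-analytic in $k$; you should factor out $g$ and run the argument on $\wi{f_j}(k\hx)e^{ik\hx\cdot z_0}$ over the band $(k_a,k_b)$ where $g\neq0$.) The two-dimensional endgame, $f_2(y)=\ov{f_1(2z_0-y)}$ forcing $\Im f_1\equiv\Im f_2\equiv 0$ and contradicting \eqref{is>0}, is correct and matches the paper.

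The genuine gap is the three-dimensional case, which you dispatch with the phrase ``analogous sign-reversal argument built around the positivity hypothesis on $\Re f$.'' There is no such sign reversal: conjugation flips $\Im$ but fixes $\Re$, so the reflection identity $f_2(y)=\ov{f_1(2z_0-y)}$ gives $\Re f_2(y)=+\Re f_1(2z_0-y)$, which is perfectly compatible with \eqref{Rs>0} for both sources. Indeed, a real nonnegative $f_1$ that is not symmetric about $z_0$, paired with $f_2(y):=f_1(2z_0-y)$, satisfies \eqref{Rs>0}, realizes $v_2=\ov{v_1}$, and therefore produces identical data for both $\tau=0$ and real $\tau_1$; the conjugate branch cannot be excluded this way. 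You should be aware that the paper's own proof gets past this point only via a factor $(-1)^n$ introduced in the substitution $y\mapsto -y$ in \eqref{f1f2}, which is not actually present (the Jacobian enters through its absolute value), so the obstruction is substantive rather than an oversight on your part. Structurally, a real $\tau_1$ determines $\Re v$ and therefore pairs naturally with a sign condition on $\Im f$, as in your 2D argument; to exploit a sign condition on $\Re f$ one would need to determine $\Im v$, e.g.\ by taking $\tau_1$ purely imaginary. As written, your $\R^3$ argument does not close.
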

\begin{proof}
From the same arguments as in Theorem \ref{uni-phaseless012}, we deduce that $\Re\big(u^{\infty}_{S}(\hx, k)\ov{\tau_{1}e^{-ik\hx\cdot z_0}}\big)$ is uniquely determined for all $\hx \in \mathbb{S}^{n-1},\,k\in (k_{min}, k_{max})$.
For the fixed point $z_0\in \R^{n}\ba\ov{D}$, define $S_{-z_0}(y,k):=S(y+z_0,k)$. Then by the translation relation \eqref{translationinvariance}, we have
\be\label{uinf-z0}
u^{\infty}_{S_{-z_0}}(\hx,k)=e^{ikz_{0}\cdot\hx}u^{\infty}_{S}(\hx,k)\quad \forall\hx\in \mathbb{S}^{n-1},\,k\in (k_{min}, k_{max}).
\en
This implies that
\ben
\Re\big(u^{\infty}_{S_{-z_0}}(\hx, k)\ov{\tau_1}\big)=\Re\big(u^{\infty}_{S}(\hx, k)\ov{\tau_{1}e^{-ik\hx\cdot z_0}}\big),
\enn
and thus $\Re\big(u^{\infty}_{S_{-z_0}}(\hx, k)\ov{\tau_1}\big)$ is uniquely determined for all $\hx\in \mathbb{S}^{n-1},\,k\in (k_{min}, k_{max})$.
From \eqref{uinf-z0}, we have $|u^{\infty}_{S_{-z_0}}(\hx, k)|=|u^{\infty}_{S}(\hx, k)|$. We rewrite $u^{\infty}_{S_{-z_0}}(\hx, k)$ in the form
\be\label{uinfphi}
u^{\infty}_{S_{-z_0}}(\hx, k)=|u^{\infty}_{S}(\hx, k)|e^{i\phi(\hx,k)},
\en
with some analytic function $\phi$ such that $\phi(\hx,k)\in [0,2\pi)$ for all $\hx \in \mathbb{S}^{n-1},\,k\in (k_{min}, k_{max})$. We claim that $\phi$ is uniquely determined.
If so, $u^{\infty}_{S_{-z_0}}$ is uniquely determined, and thus the source $S_{-z_0}$ is uniquely determined by using Theorem \ref{uni-full}. Furthermore, the source $S$ is uniquely determined since $S$ is just the translation of $S_{-z_0}$. The proof is complete by showing that $\phi$ is uniquely determined.

Assume that there are two functions $\phi_1$ and $\phi_2$.
To simplify the notations, for any $k\in (k_{min}, k_{max})$, we define
\ben
\mathbb{S}^{n-1}_0(k):=\{\hx\in \mathbb{S}^{n-1}: u^{\infty}_{S}(\hx, k)\neq 0\}.
\enn
Then by analyticity of $u^{\infty}_{S}(\hx, k)$, we deduce that the set $\mathbb{S}^{n-1}_0(k)$ has Lebesgue measure zero.
Since $|u^{\infty}_{S}(\hx, k)|$ is given and $\Re\big(u^{\infty}_{S_{-z_0}}(\hx, k)\ov{\tau}\big)$ is obtained for all $\hx\in \mathbb{S}^{n-1}, k\in (k_{min}, k_{max})$, we have
\be\label{cos1cos2}
\cos[\phi_1(\hx,k)]=\cos[\phi_2(\hx,k)], \quad \hx\in \mathbb{S}^{n-1}_0(k), \,k\in (k_{min}, k_{max}).
\en
This implies that for any fixed $\hx\in \mathbb{S}^{n-1}_0(k), \,k\in (k_{min}, k_{max})$, $\phi_1(\hx,k)=\phi_2(\hx,k)$ or $\phi_1(\hx,k)=2\pi-\phi_2(\hx,k)$  since $\phi_1(\hx,\hth), \phi_2(\hx,\hth)\in [0,2\pi)$.

We claim that
\be\label{phi1phi2}
\phi_1(\hx,k)=\phi_2(\hx,k), \quad \hx\in \mathbb{S}^{n-1}_0(k), \,k\in (k_{min}, k_{max}),
\en
or
\be\label{phi1-phi2}
\phi_1(\hx,k)=2\pi-\phi_2(\hx,k), \quad \hx\in \mathbb{S}^{n-1}_0(k), \,k\in (k_{min}, k_{max}).
\en
For the special case $\phi_1(\hx,k)\equiv\pi$ for all $\hx\in \mathbb{S}^{n-1}_0(k), \,k\in (k_{min}, k_{max})$, we have $\phi_2\equiv\phi_1=\pi$ by \eqref{cos1cos2}. Otherwise, without loss of generality, there exists a direction $\hx_1\in \mathbb{S}^{n-1}_0(k_1)$ for some wave number $k_1\in (k_{min}, k_{max})$ such that
$\phi_1(\hx_1,k_1)>\pi$. By analyticity of the function $\phi_1$, there exists a neighbourhood $\mathbb{U}(\hx_1)\subset \mathbb{S}^{n-1}_0(k_1)$ of $\hx_1$ such that
\be\label{phi1>pi}
\phi_1(\hx,k_1)>\pi,\quad\forall \hx\in \mathbb{U}(\hx_1).
\en
If $\phi_2(\hx_1,k_1)=\phi_1(\hx_1,k_1)>\pi$, then $\phi_2(\hx,k_1)=\phi_1(\hx,k_1),\quad\forall \hx\in \mathbb{U}(\hx_1)$.
Otherwise, there exists a  direction $\hx_2\in \mathbb{U}(\hx_1)$ such that $\phi_2(\hx_2,k_1)=2\pi-\phi_1(\hx_2,k_1)$.
From \eqref{phi1>pi}, we have $\phi_2(\hx_2,k_1)=2\pi-\phi_1(\hx_2,k_1)<\pi$. By analyticity of $\phi_2$ in $\mathbb{U}(\hx_1)$,
there exists a direction $\hx_0\in \mathbb{U}(\hx_1)$ such that $\phi_2(\hx_0,k_1)=\pi$. Thus $\phi_1(\hx_0,k_1)=\phi_2(\hx_0,k_1)=\pi$
by \eqref{cos1cos2}. However, this leads to a contradiction to \eqref{phi1>pi}.

We show that \eqref{phi1-phi2} does not hold. Let $S_1:=f_1g$ and $S_2:=f_2g$ be sources corresponding to $\phi_1$ and $\phi_2$, respectively. With the help of unique continuation, from \eqref{uinfphi} and \eqref{phi1-phi2} we have
\ben
u^{\infty}_{S_1}(\hx, k)= \ov{u^{\infty}_{S_2}(\hx, k)}, \quad \hx\in \mathbb{S}^{n-1}, \,k\in (k_{min}, k_{max}).
\enn
Using the representation \eqref{uinf} and the assumption that the function $g$ is a given real valued function satisfying \eqref{gassumption}, we have
\be\label{f1f2}
\int_{\R^n}e^{-ik\hx\cdot y} f_1(y)dy
&=& \ov{\int_{\R^n}e^{-ik\hx\cdot y} f_2(y)dy}\cr
&=& \int_{\R^n}e^{ik\hx\cdot y} \ov{f_2(y)}dy \cr
&=& (-1)^n\int_{\R^n}e^{-ik\hx\cdot y} \ov{f_2(-y)}dy, \quad \hx\in \mathbb{S}^{n-1}, \,k\in (k_{a}, k_{b}).
\en
Clearly, both sides are analytic on the wave number $k$ and thus \eqref{f1f2} holds for all $\hx\in \mathbb{S}^{n-1},\, k\in [0,\infty)$.
By the Fourier integral theorem, we conclude that
\be\label{S1S2}
f_1(y)=(-1)^n \ov{f_2(-y)}, \quad y\in \R^n.
\en
In $\R^3$, from the assumption \eqref{Rs>0}, we have
\be\label{S1y0}
\Re(f_1(y_0))>0 \quad\mbox{for\,some}\,\,y_0\in \R^3.
\en
However, from \eqref{S1S2}, using the assumption \eqref{Rs>0} on $S_2$, we deduce that
\ben
\Re(f_1(y)) = (-1)\Re(f_2(-y))\leq 0, \forall y\in \R^3.
\enn
This is a contradiction to \eqref{S1y0}. In $\R^2$, we can derive similar contradiction. Thus \eqref{phi1-phi2} does not hold, and we deduce that \eqref{phi1phi2}, i.e.,
$\phi_1=\phi_2$. The proof is complete.
\end{proof}

Assumptions \eqref{Rs>0} and \eqref{is>0} are used to show that \eqref{phi1-phi2} does not hold. Theorem \ref{uni-phaseless012} also holds if the assumptions \eqref{Rs>0} and \eqref{is>0} are replaced by
\be\label{Rs<0}
\Re(f(y))\leq0 \quad\mbox{and}\quad \exists\, y_0\in\R^3\, s.t. \Re(f(y_0))<0,
\en
and
\be\label{is<0}
\Im(f(y)\leq0 \quad\mbox{and}\quad \exists\, y_0\in\R^2\, s.t. \Im(f(y_0))<0,
\en
respectively. We think these assumptions can be removed. But this need different techniques.

In Theorems \ref{uni-phaseless012} and \ref{uni-phaseless01}, the phaseless data set includes the data with scattering strength $\tau=0$.
For diversity, we now prove a uniqueness theorem with three different scattering strength.

\begin{theorem}\label{uni-phaseless123}
Let $\tau_1, \tau_2, \tau_3\in\C$ be such that $\tau_2-\tau_1$ and $\tau_3-\tau_1$ are linearly independent.
Then the source $S$ is uniquely determined by the phaseless data set
$\big\{|u^{\infty}_{S,z_0}(\hx, k, \tau_j)|:\,\hx\in\mathbb{S}^{n-1},\,k\in (k_{min}, k_{max}), j=1,2,3\big\}$, where $z_0\in\R^n\ba\ov{D}$ is a fixed point outside $D$.
\end{theorem}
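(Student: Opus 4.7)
The plan is to reduce the problem, for each fixed observation direction and wave number, to a planar geometric phase retrieval of exactly the type handled by Lemma~\ref{phaseretrieval}, and then to invoke Theorem~\ref{uni-full}.

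First, for any fixed $\hx\in\mathbb{S}^{n-1}$ and $k\in(k_{min},k_{max})$, the representation \eqref{FarFieldDplusz0} gives
\ben
|u^{\infty}_{S,z_0}(\hx,k,\tau_j)|=|u^{\infty}_{S}(\hx,k)+\tau_j e^{-ik\hx\cdot z_0}|,\quad j=1,2,3.
\enn
Since $|e^{-ik\hx\cdot z_0}|=1$, setting $a:=e^{-ik\hx\cdot z_0}$ and $z:=-u^{\infty}_{S}(\hx,k)/a\in\C$, I rewrite the three data equalities as
\ben
|u^{\infty}_{S,z_0}(\hx,k,\tau_j)|=|\tau_j-z|,\quad j=1,2,3.
\enn
Hence the three phaseless measurements at $(\hx,k)$ are precisely the three distances in $\C$ from the unknown point $z$ to the reference points $\tau_1,\tau_2,\tau_3$.

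Next, the hypothesis that $\tau_2-\tau_1$ and $\tau_3-\tau_1$ are linearly independent (over $\R$) is equivalent to saying that $\tau_1,\tau_2,\tau_3$ are three distinct non-collinear points of $\C\cong\R^2$. Lemma~\ref{phaseretrieval} therefore recovers $z$ uniquely from the three distances, and thereby uniquely determines
\ben
u^{\infty}_{S}(\hx,k)=-z\,e^{-ik\hx\cdot z_0}
\enn
for every $\hx\in\mathbb{S}^{n-1}$ and every $k\in(k_{min},k_{max})$.

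Finally, with the complete phased far field pattern recovered, Theorem~\ref{uni-full} yields unique determination of $S$ and closes the argument. The proof is essentially mechanical once the reformulation $z=-u^{\infty}_{S}(\hx,k)/a$ is in place; the only subtle point is matching the linear independence hypothesis with the non-collinearity condition required by Lemma~\ref{phaseretrieval}. In contrast to Theorems~\ref{uni-phaseless012} and \ref{uni-phaseless01}, no use is made of the reference measurement at $\tau=0$, of a second point-source location, or of sign assumptions such as \eqref{Rs>0}, \eqref{is>0}, so I do not expect any genuine obstacle beyond this bookkeeping.
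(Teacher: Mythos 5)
Your proposal is correct and follows essentially the same route as the paper: reduce each measurement at fixed $(\hx,k)$ to three distances in the plane, observe that multiplication by the unimodular factor $e^{-ik\hx\cdot z_0}$ is a rotation preserving the non-collinearity guaranteed by the hypothesis on $\tau_1,\tau_2,\tau_3$, apply Lemma~\ref{phaseretrieval} to recover $u^{\infty}_{S}(\hx,k)$, and conclude with Theorem~\ref{uni-full}. The only cosmetic difference is that you rotate the unknown (dividing by $e^{-ik\hx\cdot z_0}$) while the paper rotates the reference points $\tau_j$; the two are equivalent.
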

\begin{proof}
For any fixed $\hx\in\mathbb{S}^{n-1},\,k\in (k_{min}, k_{max})$, define
\ben
z_j(\hx,k):=\tau_{j}e^{-ik\hx\cdot z_0},\,j=1,2,3.
\enn
Obviously, $z_j$ is just the counterclockwise rotation of $\tau_j$ through the angle $\hth_0:=k(\hth-\hx)\cdot z_0$ about the origin for all $j=1,2,3.$
Thus, by the assumption on $\tau_j$, we have that the complex numbers $z_j,\,j=1,2,3$ are not collinear. Using Lemma \ref{phaseretrieval} and the data
\ben
|u^{\infty}_{S, z_0}(\hx, k, \tau_j)| = |u^{\infty}_{S}(\hx, k)+z_j(\hx,k)|,\quad j=1,2,3,
\enn
we found that the far field patterns $u^{\infty}_{S}(\hx, k)$ for all $\hx\in\mathbb{S}^{n-1},\,k\in (k_{min}, k_{max})$ can be uniquely determined. Then the proof is finished by Theorem \ref{uni-full}.
\end{proof}

We want to remark that by analyticity the far field pattern can be determined on the whole unit sphere $\mathbb{S}^{n-1}$ using partial value on some surface patch of $\mathbb{S}^{n-1}$.
In the proofs of Theorems \ref{uni-phaseless012}, \ref{uni-phaseless01} and \ref{uni-phaseless123},  we have showed that the far field pattern $u^{\infty}_S$ can be determined uniquely. Thus the phaseless data can be taken on any nonempty surface patch of $\mathbb{S}^{n-1}$.

We are also interested in broadband sparse measurements, i.e., the data can only be measured in finitely many observation directions,
\ben
\{\hx_1, \hx_2, \cdots, \hx_M\}=:\Theta\subset \mathbb{S}^{n-1}.
\enn
In particular, we are interested in what information can be obtained using multi-frequency data with a single observation direction.
For a bounded domain $D$, the $\hx$-strip hull of $D$ for a single observation direction $\hx \in \,\Theta$ is defined by
\ben
S_{D}(\hx):=  \{ y\in \mathbb \R^{n}\; | \; \inf_{z\in D}z\cdot \hx \leq y\cdot \hx \leq \sup_{z\in D}z\cdot \hx\},
\enn
which is the smallest strip (region between two parallel hyper-planes) with normals in the directions $\pm \hx$ that contains $\overline{D}$.
Let
\ben
\Pi_{\alpha}:=\{y\in \R^n | y\cdot\hx+\alpha=0\}
\enn
be a hyperplane with normal $\hx$. Define
\be
\hat{f}(\alpha):=\int_{\Pi_\alpha}f(y)ds(y).
\en
In a recent work \cite{AlaHuLiuSun}, under the assumption that the set
\be\label{set2}
\{\alpha\in\R|\, \Pi_\alpha\subset S_D(\hx), \hat{f}(\alpha)=0\}
\en
has Lebesgue measure zero, it is shown that the strip $S_D(\hx)$ can be uniquely determined by the phased far field data set
$\{u^{\infty}_{S}(\hx, k): \, k\in (k_a, k_b)\}$. Based on this fact, following exactly the same line of arguments that are used in the proof of Theorem \ref{uni-phaseless123}, using
Lemma \ref{phaseretrieval} again, we have the following uniqueness result with phaseless far field data.

\begin{theorem}\label{uni-strip}
Let $\tau_1, \tau_2, \tau_3\in\C$ such that $\tau_2-\tau_1$ and $\tau_3-\tau_1$ are linearly independent.
If the set \eqref{set2} has Lebesgue measure zero, then, for any fixed $\hx\in\mathbb{S}^{n-1}$, the strip $S_D(\hx)$ of the source support $D$ can be uniquely determined by the phaseless far field data set $\big\{|u^{\infty}_{S,z_0}(\hx, k, \tau_j)|:\,k\in (k_{min}, k_{max}), j=1,2,3\big\}$.
\end{theorem}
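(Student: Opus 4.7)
The plan is to reduce the phaseless problem for a single observation direction to the phased problem already addressed in \cite{AlaHuLiuSun}, and then invoke their strip-determination result. This is essentially the same strategy as in Theorem \ref{uni-phaseless123}, except that here the phase retrieval is performed one direction $\hx$ at a time, so I only need multi-frequency phased data along that single direction to proceed.

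Fix $\hx\in\mathbb{S}^{n-1}$. For each $k\in(k_{min},k_{max})$, I first want to recover $u^{\infty}_S(\hx,k)$ from the three phaseless measurements. Setting $z_j(\hx,k):=-\tau_j e^{-ik\hx\cdot z_0}$ for $j=1,2,3$, the data \eqref{FarFieldDplusz0} gives
\ben
|u^{\infty}_{S,z_0}(\hx,k,\tau_j)|=|u^{\infty}_{S}(\hx,k)-z_j(\hx,k)|,\quad j=1,2,3,
\enn
so these are the distances from the unknown complex number $u^{\infty}_S(\hx,k)$ to the three known points $z_j(\hx,k)$. Since $z_j(\hx,k)$ is obtained from $-\tau_j$ by a common rotation through angle $-k\hx\cdot z_0$ about the origin, the linear independence of $\tau_2-\tau_1$ and $\tau_3-\tau_1$ in $\R^2$ carries over to the linear independence of $z_2(\hx,k)-z_1(\hx,k)$ and $z_3(\hx,k)-z_1(\hx,k)$, i.e., the three reference points are non-collinear in $\C$ for every $k$. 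Lemma \ref{phaseretrieval} then yields $u^{\infty}_S(\hx,k)$ uniquely for every $k\in(k_{min},k_{max})$.

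With the phased data $\{u^{\infty}_S(\hx,k):k\in(k_{min},k_{max})\}$ in hand for the single direction $\hx$, I simply invoke the multi-frequency single-direction result of \cite{AlaHuLiuSun} recalled before the statement: under the hypothesis that the set in \eqref{set2} has Lebesgue measure zero, these phased data determine the $\hx$-strip $S_D(\hx)$ uniquely. Restricting $k$ to the subinterval $(k_a,k_b)\subset(k_{min},k_{max})$ (where $g$ is nonzero, ensuring $\hat{f}(\alpha)$ is meaningful via the factorization $S=fg$) causes no loss, since the reconstructed far field pattern is available on the larger interval.

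There is essentially no hard step here: the substantive content is packaged inside Lemma \ref{phaseretrieval} and the cited strip-determination theorem from \cite{AlaHuLiuSun}. The only thing to verify carefully is the non-collinearity transfer under the $k$-dependent rotation, which is an elementary observation. If any issue arose it would be at the boundary, for example a $k$ at which $u^{\infty}_S(\hx,k)$ happens to coincide with one of the $z_j(\hx,k)$; but Lemma \ref{phaseretrieval} still applies in that degenerate case (one of the distances is zero and the point is immediately identified), so no extra argument is required.
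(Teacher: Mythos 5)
Your proposal is correct and follows essentially the same route as the paper, which itself only sketches the proof by saying to repeat the argument of Theorem \ref{uni-phaseless123}: apply Lemma \ref{phaseretrieval} with the three non-collinear reference points $-\tau_j e^{-ik\hx\cdot z_0}$ (non-collinearity being preserved under the $k$-dependent rotation) to recover $u^{\infty}_S(\hx,k)$ for all $k$, then invoke the strip-determination result of \cite{AlaHuLiuSun} for phased multi-frequency data at a single direction. Your explicit treatment of the degenerate case and of the restriction to $(k_a,k_b)$ is a harmless elaboration of what the paper leaves implicit.
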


A by-product of Theorem \ref{uni-strip} is that one may determine a convex hull of the source support $D$ by using two or more finite observation directions. This will be verified numerically in the next sections. Similarly, for the case with broadband sparse phaseless scattered data, we can show that, under certain conditions, the smallest annular centered at the measurement point containing the source support can be uniquely determined. We refer to \cite{AlaHuLiuSun} for more details on uniqueness result by using the phased data.

\section{Direct sampling methods with broadband sparse data}
\label{DSMs}
The uniqueness results in the previous section ensure the possibility to reconstruct the unknown objects by stable algorithms. In this section, we investigate the numerical methods for support reconstruction of the source $S$ by using phaseless far field data $\big| u^{\infty}_{S, z_0} \big|$. We will focus on designing the direct sampling methods
which do not need any a priori information on the geometry and physical properties of the obstacle.

Denote by $\Theta$ a finite set with finitely many observation directions as elements. We first introduce an auxiliary function
\be\label{G}
G(z,\hx):=\int_{k_{min}}^{k_{max}}u^{\infty}_{S}(\hx,k)e^{ik\hx\cdot z}dk, \quad\,z\in\R^n,\,\hx\in \Theta.
\en
Let $\hx^{\perp}$ be a direction perpendicular to $\hx$ and we have
\be\label{I1behavior1}
G(z+\alpha\hx^{\perp}, \hx)=G(z,\hx), \quad \forall z\in\R^n, \,\alpha\in\R,
\en
since $\hx^{\perp}\cdot \hx=0$.
This further implies that the functional $G$ has the same value for sampling points in the hyperplane with normal direction $\hx$.
By the well known Riemann-Lebesgue Lemma, we obtain that $G$ tends to $0$ as $|z|\rightarrow\infty$.

In the sequel, we assume that the source $S$ takes the form
\ben
S(y,k)=f(y)g(k), \quad y\in\R^n, k\in (k_{min},k_{max})
\enn
with $f\in L^{\infty}(\R^n),\,g\in C^1(0,\infty)$.
Recall from \eqref{uinf} that the far field pattern has the following representation
\ben
 u^{\infty}_{S}(\hx,k)=g(k)\int_{D}e^{-ik\hx\cdot\,y}f(y)dy,\quad\hx\in\Theta, \,k\in (k_{min},k_{max}).
\enn
Inserting it into the indicator $G$ defined in \eqref{G}, changing the order of integration, and integrating by parts, we have
\be\label{G2}
G(z,\hx)
&=&\int_{D}\int_{k_{min}}^{k_{max}}e^{ik\hx\cdot\,(z-y)} f(y)g(k)dk dy \cr
&=&\int_{D}\frac{\mathcal {S}_z(y,\hx)}{i\hx\cdot\,(z-y)}dy,
\en
where $S_z\in L^{\infty}(D)$ is given by
\ben
{S}_z(y,\hx):=f(y)\left[g(k)e^{ik\hx\cdot(z-y)}\Big|^{k_{max}}_{k_{min}}-\int_{k_{min}}^{k_{max}}e^{ik\hx\cdot(z-y)}g'(k)dk\right].
\enn
This implies that the functional $G$ is a superposition of functions that decays as $1/|\hx\cdot(z-y)|$ as the sampling point $z$ goes away from the strip $S_{D}(\hx)$.

For any $\hx\in \mathbb{S}^{n-1},\,k\in (k_{min}, k_{max}),\,\tau\in\C$, we define
\be\label{F}
\mathcal {F}(\hx,k,z_0,\tau)
&:=&|u^{\infty}_{S,z_0}(\hx, k, \tau)|^2-|u^{\infty}_{S}(\hx, \hth)|^2-|\tau|^2\cr
&=&|u^{\infty}_{S}(\hx, k)+\tau e^{-ikz_0\cdot\hx}|^2-|u^{\infty}_{S}(\hx, k)|^2-|\tau|^2\cr
&=&u^{\infty}_{S}(\hx, k)\ov{\tau}e^{ikz_0\cdot\hx}+\ov{u^{\infty}_{S}(\hx, k)}\tau e^{-ikz_0\cdot\hx}.
\en
Then, for any fixed $\tau\in \C\ba\{0\}$ and $z_0\in\R^n\ba\ov{D}$, we introduce the following indicator
\be
\label{Indicator01}{\bf I^{\Theta}_{z_0}}(z)&:=&\sum_{\hx\in\Theta}\left|\int_{k_{min}}^{k_{max}} \mathcal {F}(\hx,k,z_0,\tau)\cos[k\hx\cdot(z-z_0)]dk\right|,\quad\,z\in\R^n.
\en
Inserting \eqref{F} into \eqref{Indicator01}, straightforward calculations show that
\ben
{\bf I^{\Theta}_{z_0}}(z)=\sum_{\hx\in\Theta}\left| V_{z_0}(z,\hx)+\ov{V_{z_0}(z,\hx)}\right|
\enn
with
\ben
V_{z_0}(z,\hx):=\frac{\ov{\tau}}{2}\Big[G(z,\hx)+G(2z_0-z,\hx)\Big].
\enn
Let $D(z_0)$ be the point symmetric domain of $D$ with respect to $z_0$.
We expect that the indicator ${\bf I^{\Theta}_{z_0}}$ takes its local maximum on the locations of $D$ and $D(z_0)$.

Note that the indicator ${\bf I^{\Theta}_{z_0}}$ produces a false scatterer $D(z_0)$.
However, since we have the freedom to choose the point $z_0$, we can always choose $z_0$ such that the false domain $D(z_0)$ located outside our interested searching domain.
One may also overcome this problem by considering another indicator  ${\bf I^{\Theta}_{z_1}}$ with $z_1\in \R^n\ba\ov{D}$ and $z_1\neq z_0$.
So the scatterer $D$ can be determined numerically by the phaseless data set
$\big\{|u^{\infty}_{D\cup\{p\}}(\hx, k, \tau)|:\,\hx\in\mathbb{S}^{n-1}, k\in (k_{min}, k_{max}),\,\tau\in\{0,\tau_1\}, p\in\{z_0,z_1\}\big\}$. This is just the second phaseless data set
provided in Theorem \ref{uni-phaseless012}.
\quad\,\\

{\bf Scatterer Reconstruction Scheme One.}
\begin{itemize}{\em
  \item (1). Collect the phaseless data set $\big\{|u^{\infty}_{S, z_0}(\hx, k, \tau)|:\,\hx\in\Theta, \, k\in (k_{min}, k_{max}),\, \tau\in\{0,\tau_1\}\big\}$.
  \item (2). Select a sampling region in $\R^{n}$ with a fine mesh $\mathcal {T}$ containing the scatterer $D$,
  \item (3). Compute the indicator functional ${\bf I^{\Theta}_{z_0}}$  for all sampling point $z\in\mathcal {T}$,
  \item (4). Plot the indicator functional ${\bf I^{\Theta}_{z_0}}$ }.
\end{itemize}\,\quad\\

Using the {\bf Phase Retrieval Scheme} proposed in the previous Section, we obtain the phased far field pattern $u_{S}^{\infty}$. Then we have the second scatterer reconstruction algorithm using the following indicator \cite{AlaHuLiuSun}
\be\label{indicatorM}
I_2(z):=\sum_{\hx\in\Theta}\Big|G(z,\hx)\Big|,\quad z\in\R^n.
\en

{\bf Scatterer Reconstruction Scheme Two.}
\begin{itemize}{\em
  \item (1). Collect the phaseless data set
        $\big\{|u^{\infty}_{S, z_0}(\hx, k, \tau)|:\,\hx\in\Theta, \, k\in (k_{min}, k_{max}),\,  \tau\in\{\tau_1,\tau_2,\tau_3\}\big\}$.
  \item (2). Use the {\bf Phase Retrieval Scheme} to obtain the phased far field patterns $u^{\infty}_{S}(\hx, k)$ for all $\hx\in\Theta, \, k\in (k_{min}, k_{max})$,
  \item (3). Select a sampling region in $\R^{n}$ with a fine mesh $\mathcal {T}$ containing $D$,
  \item (4). Compute the indicator functional ${\bf I_2}(z)$ for all sampling point $z\in\mathcal {T}$,
  \item (5). Plot the indicator functional ${\bf I_2}(z)$.}
\end{itemize}


\section{Numerical examples and discussions}
\label{NumExamples}
\setcounter{equation}{0}

Now we present a variety of numerical examples in two dimensions to illustrate the applicability and effectiveness of our sampling methods with broadband sparse data.
The forward problems are computed the same as in \cite{AlaHuLiuSun}. For all examples, for $\hx \in \Theta$, we assume to have multiple frequency phaseless far field
data $u_S^\infty(\hx,k_j),\quad j=1,\cdots,N,$
where $N=20,k_{min}=0.5,k_{max}=20$ such that  $k_j=(j-0.5)\Delta k, \Delta k=\frac{k_{max}}{N}$.
We further perturb this data by random noise
\ben
\Big|u_{S,z_0}^{\infty,\delta}(\hx, k_l,\tau)\Big| = |u_{S,z_0}^{\infty}(\hx, k_l,\tau)| (1+\delta*e_{rel}), \quad l=1,2,\cdots, N,
\enn
where $e_{rel}$ is a uniformly distributed random number in the open interval $(-1,1)$. The value of $\delta$ used in our code is the relative error level.
We also consider absolute error in {\bf Example PhaseRetrieval}. In this case, we perturb the phaseless data
\ben
\Big|u_{S,z_0}^{\infty,\delta}(\hx, k_l,\tau)\Big| = \max\Big\{0,\,|u_{S,z_0}^{\infty}(\hx, k_l,\tau)|+\delta*e_{abs}\Big\},\quad l=1,2,\cdots, N,
\enn
where $e_{abs}$ is again a uniformly distributed random number in the open interval $(-1,1)$. Here, the value $\delta$ denotes the total error level in the measured data.
.

\subsection{${\bf I^{\Theta}_{z_0}}$ with one and two observation directions} We first consider the case of one observation using $\tau=1$ and different $z_0$. Let $S=5$ and the support of
$S$ is a rectangle given by $(1, 2) \times(1,1.6)$. In Fig. \ref{rec1}, we plot the indicators using $\hx= (1,0)$ and three reference points $z_0 = (1.5,4), z_0 = (4,4)$ and $z_0 = (12,12)$. The picture clearly shows that the source support and the corresponding point symmetric domain (with respect to $z_0$) lies in a strip, which is perpendicular to the observation direction.
\begin{figure}[htbp]
  \centering
  \subfigure[\textbf{$z_0=(1.5,4)$.}]{
    \includegraphics[width=2in]{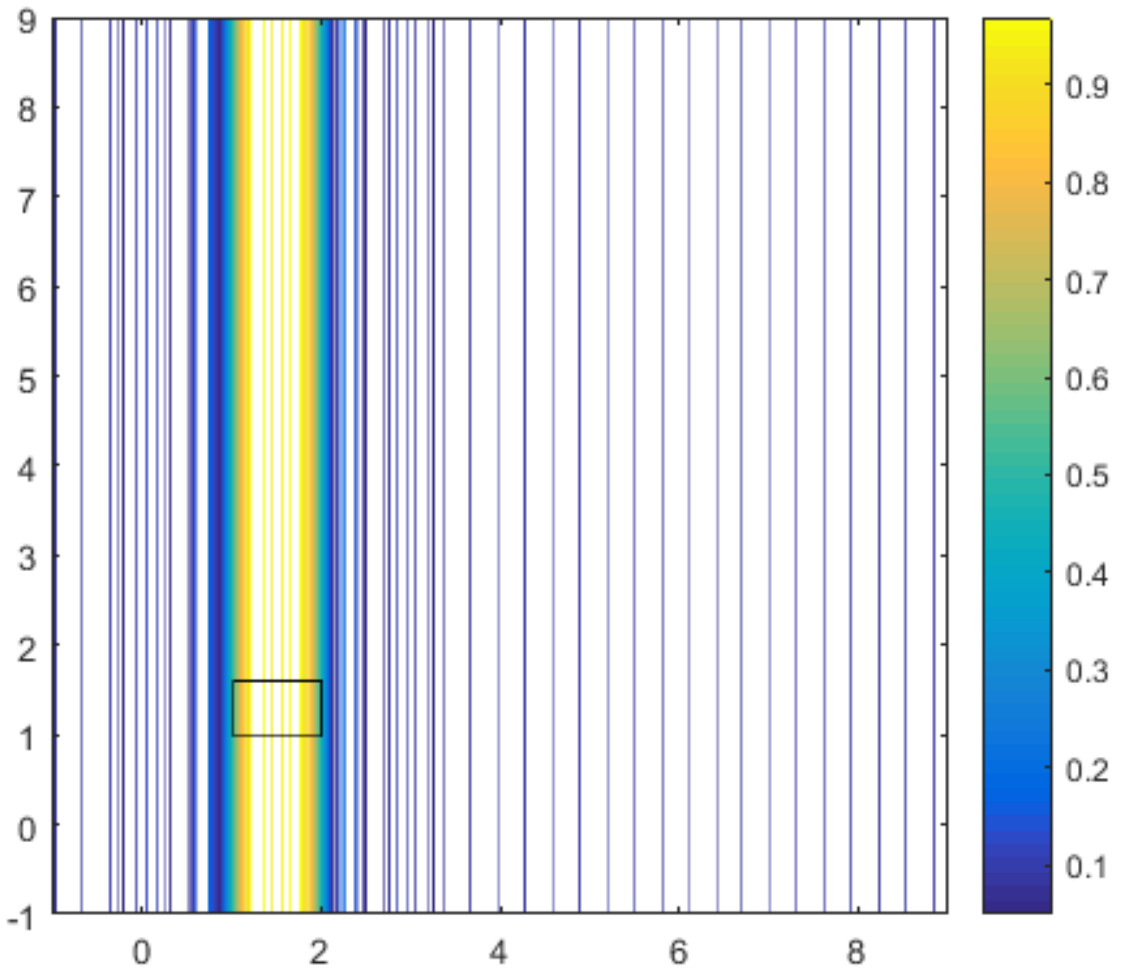}}
  \subfigure[\textbf{$z_0=(4,4)$.}]{
    \includegraphics[width=2in]{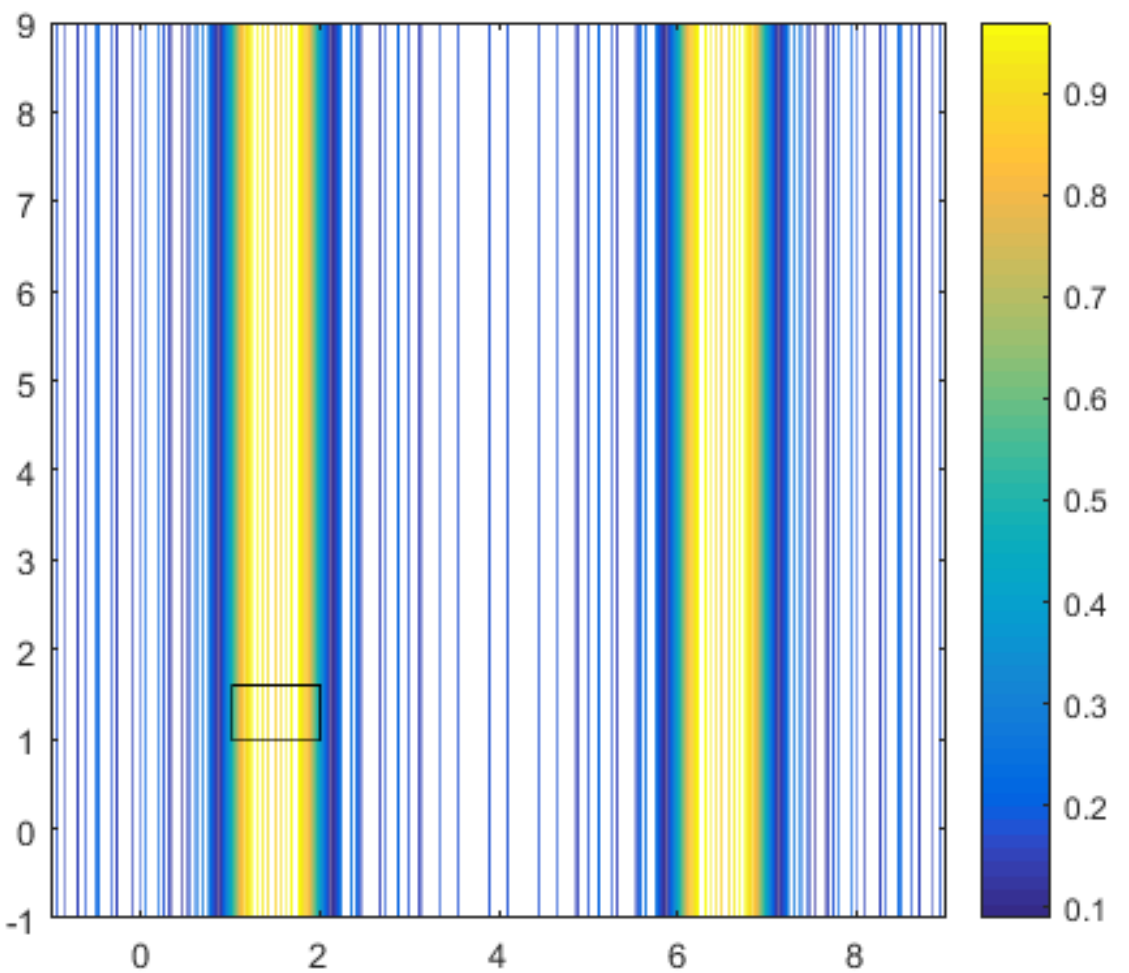}}
  \subfigure[\textbf{$z_0=(12,12)$.}]{
    \includegraphics[width=2in]{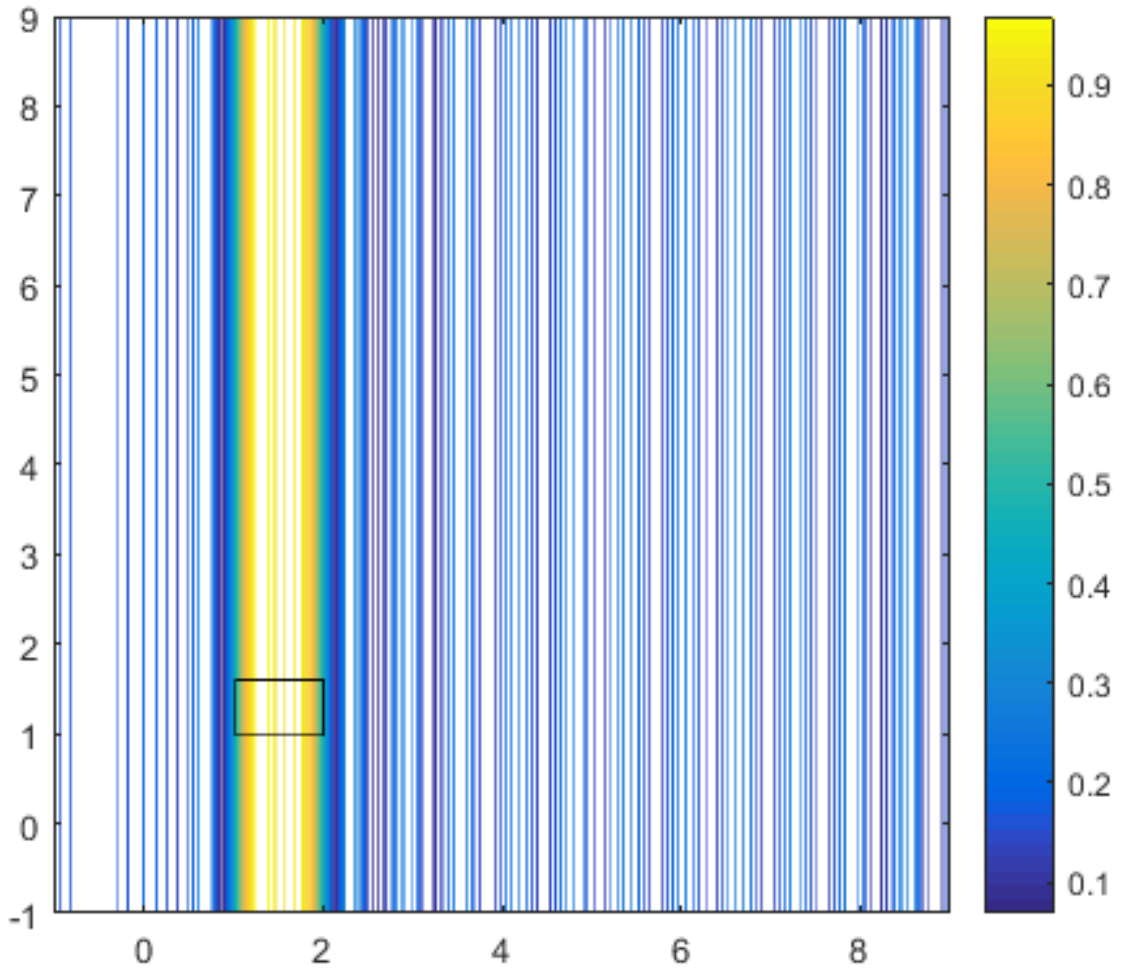}}
\caption{${\bf I^{\Theta}_{z_0}}$ with one observation direction when $S = 5$.}
\label{rec1}
\end{figure}

Next we consider two observation angles $0$ and $\pi/2$, we plot the indicators in Fig. \ref{rec2}.  Since the observation directions are perpendicular to each other, the strips are perpendicular to each other too. The source support must be located in the cross sections of the two strips. To find the correct source support, one may consider choosing a reference point far away from the sampling domain (as shown in Figure \ref{rec2}(c)), or using multiple observation directions (as shown in Figure \ref{rec3}).

\begin{figure}[htbp]
  \centering
  \subfigure[\textbf{$z_0=(1.5,4)$.}]{
    \includegraphics[width=2in]{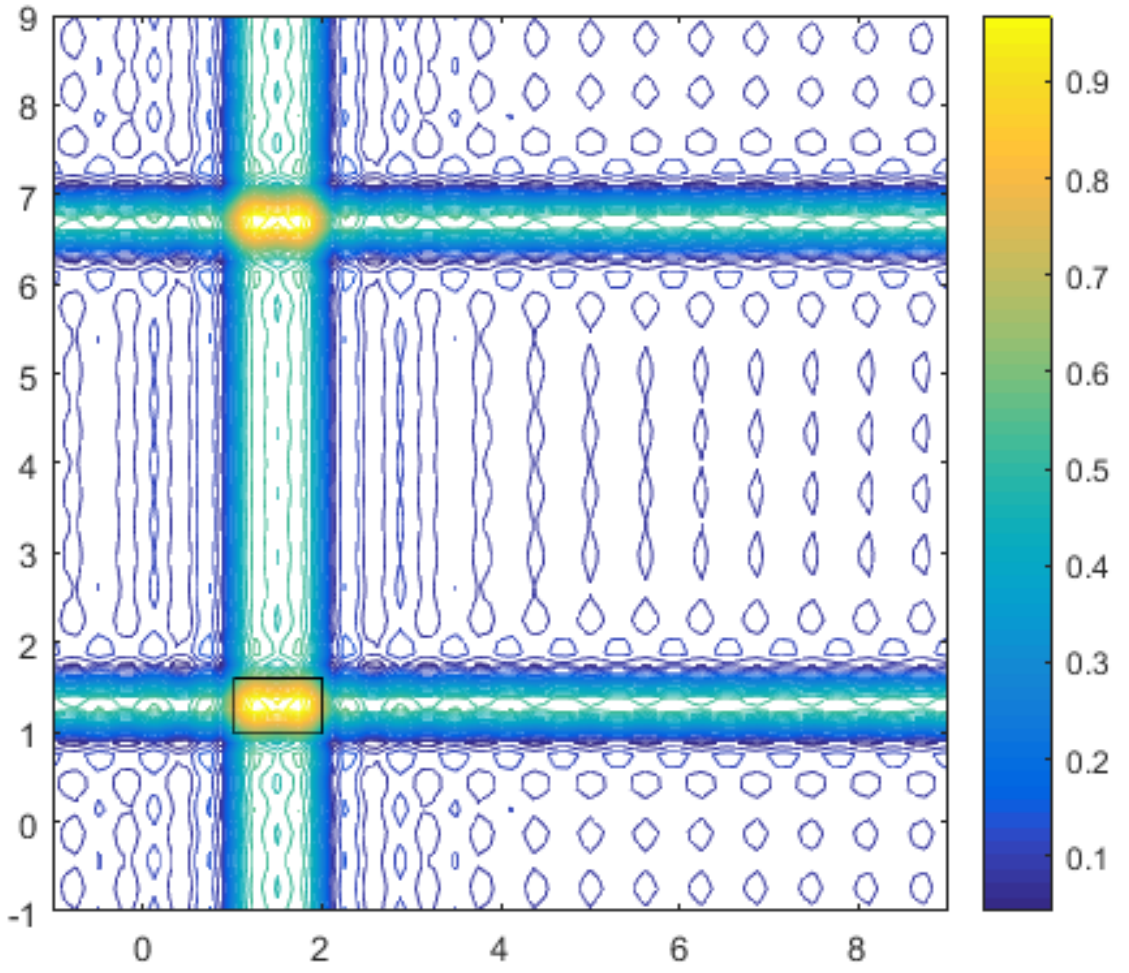}}
  \subfigure[\textbf{$z_0=(4,4)$.}]{
    \includegraphics[width=2in]{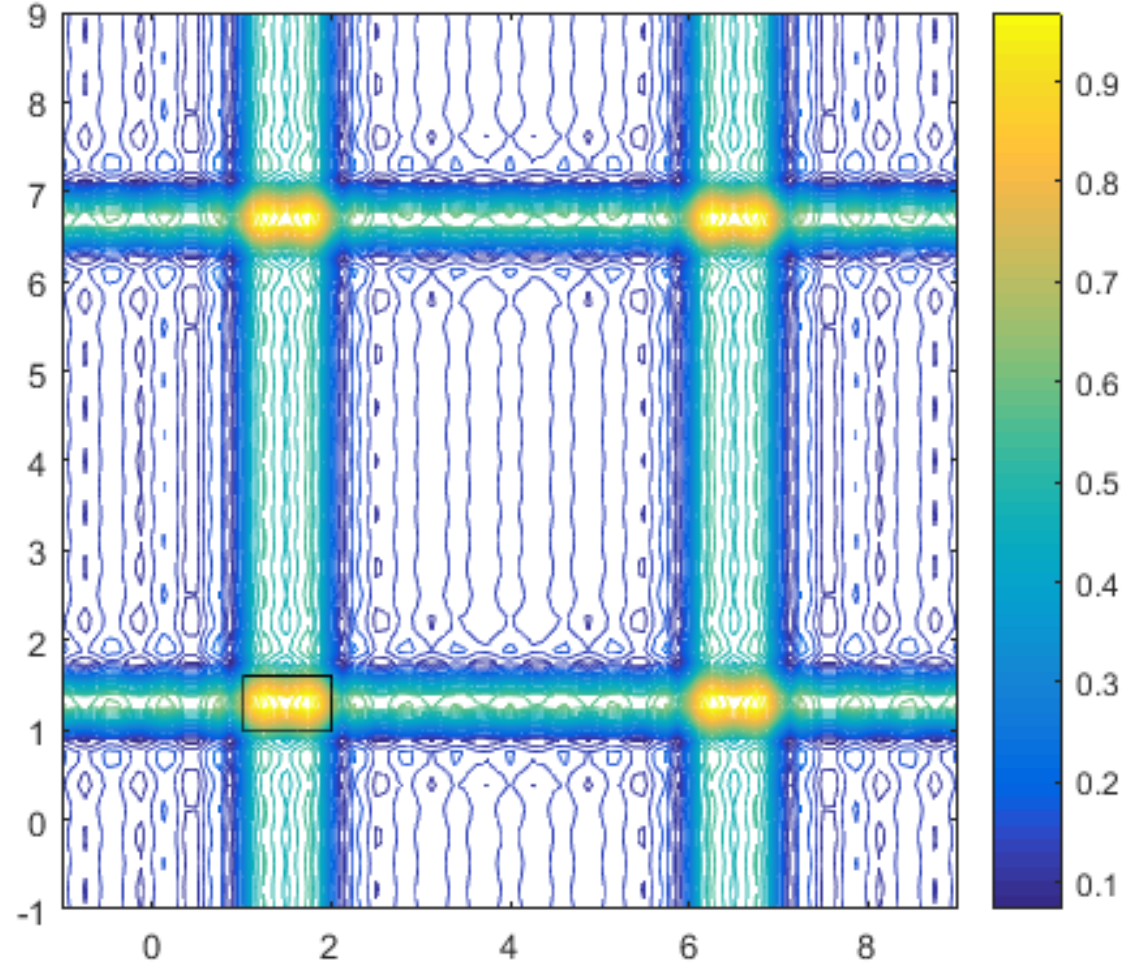}}
  \subfigure[\textbf{$z_0=(12,12)$.}]{
    \includegraphics[width=2in]{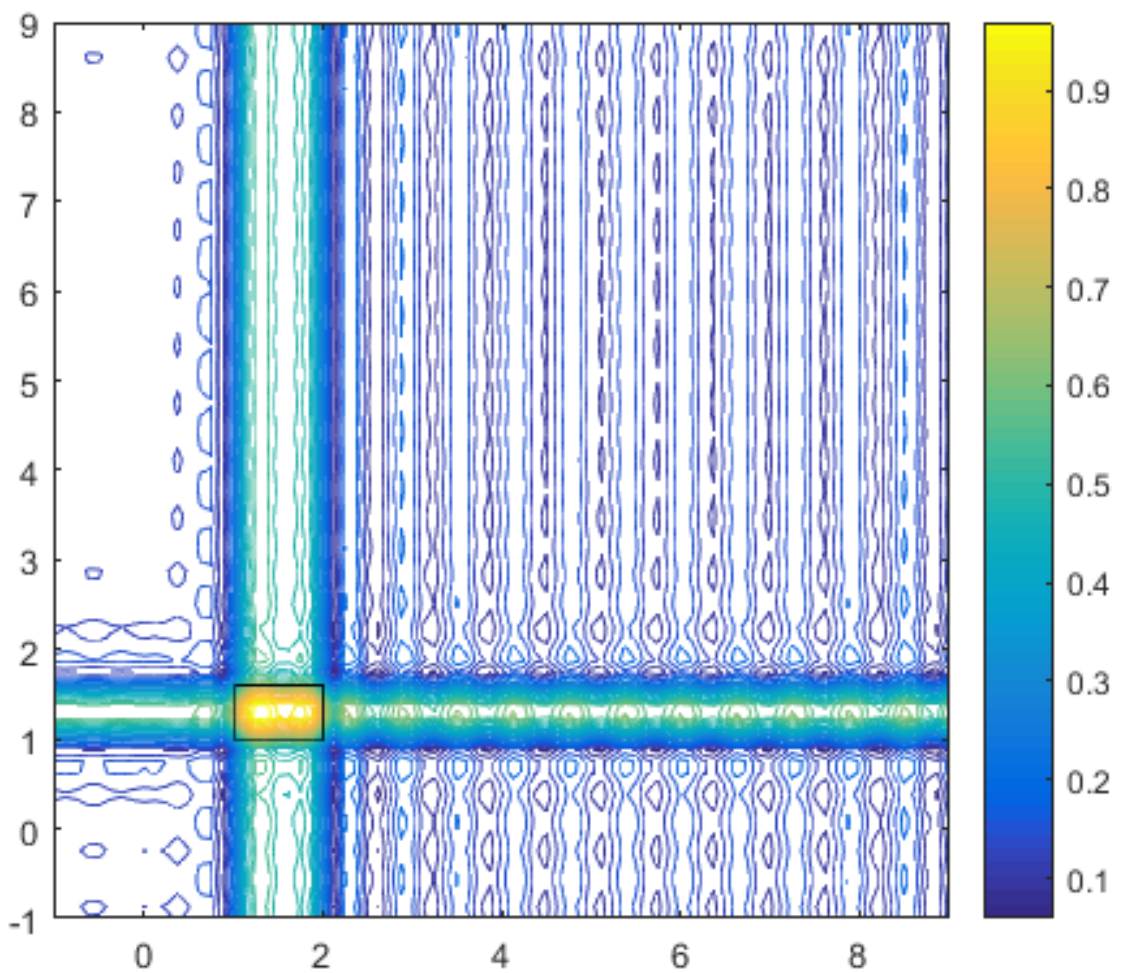}}
\caption{${\bf I^{\Theta}_{z_0}}$ with two observation directions when $S = 5$.}
\label{rec2}
\end{figure}

\subsection{${\bf I^{\Theta}_{z_0}}$ with multiple observation directions}
We consider three sources in this part,  the first one is still the rectangle given by $(1, 2) \times(1,1.6)$ with $S = 5$, the second one is a combination of a
a rectangle given by $(1, 1.6) \times(1,1.4)$ and  a disc with radius
0.2 centered at $(-0.5,-0.5)$ with $S=(x^2-y^2+5)k$, the third one is a L-shaped domain given by  $(0,2) \times(0,2) \setminus (1/16, 2) \times(1/16,2)$ with $S = 5$.

Now we use 20 observation angles $\hx_j, j=1,\cdots, 20$ such that  $\hx_j=-\pi/2+j\pi/20$. Note that $\hx_j\in(-\pi/2,\pi/2)$. Fig. \ref{rec3} gives the results for
rectangle with different $z_0$. The locations and sizes
of support of $S$ are reconstructed correctly.
\begin{figure}[htbp]
  \centering
  \subfigure[\textbf{$z_0=(1.5,4)$.}]{
    \includegraphics[width=2in]{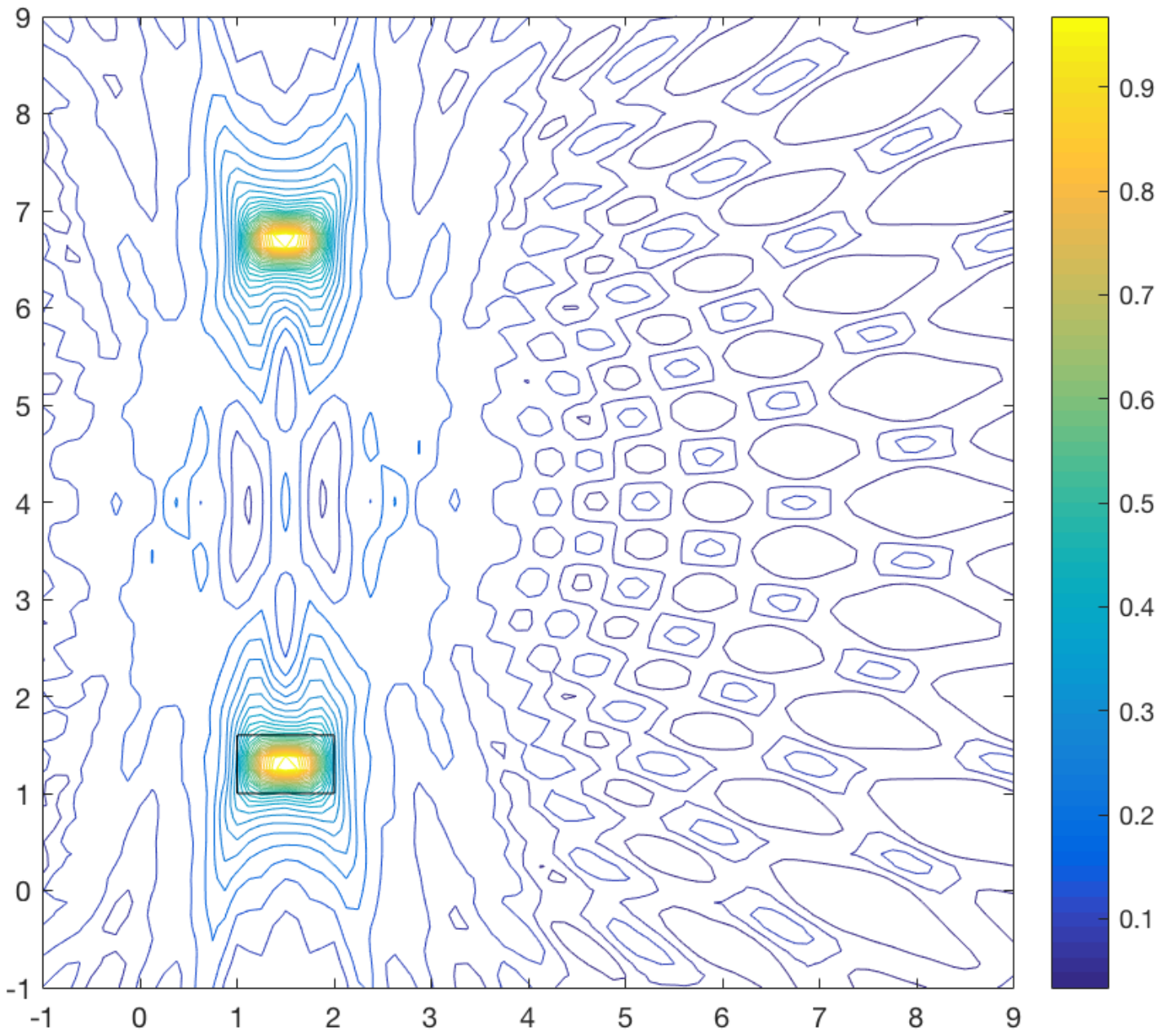}}
  \subfigure[\textbf{$z_0=(4,4)$.}]{
    \includegraphics[width=2in]{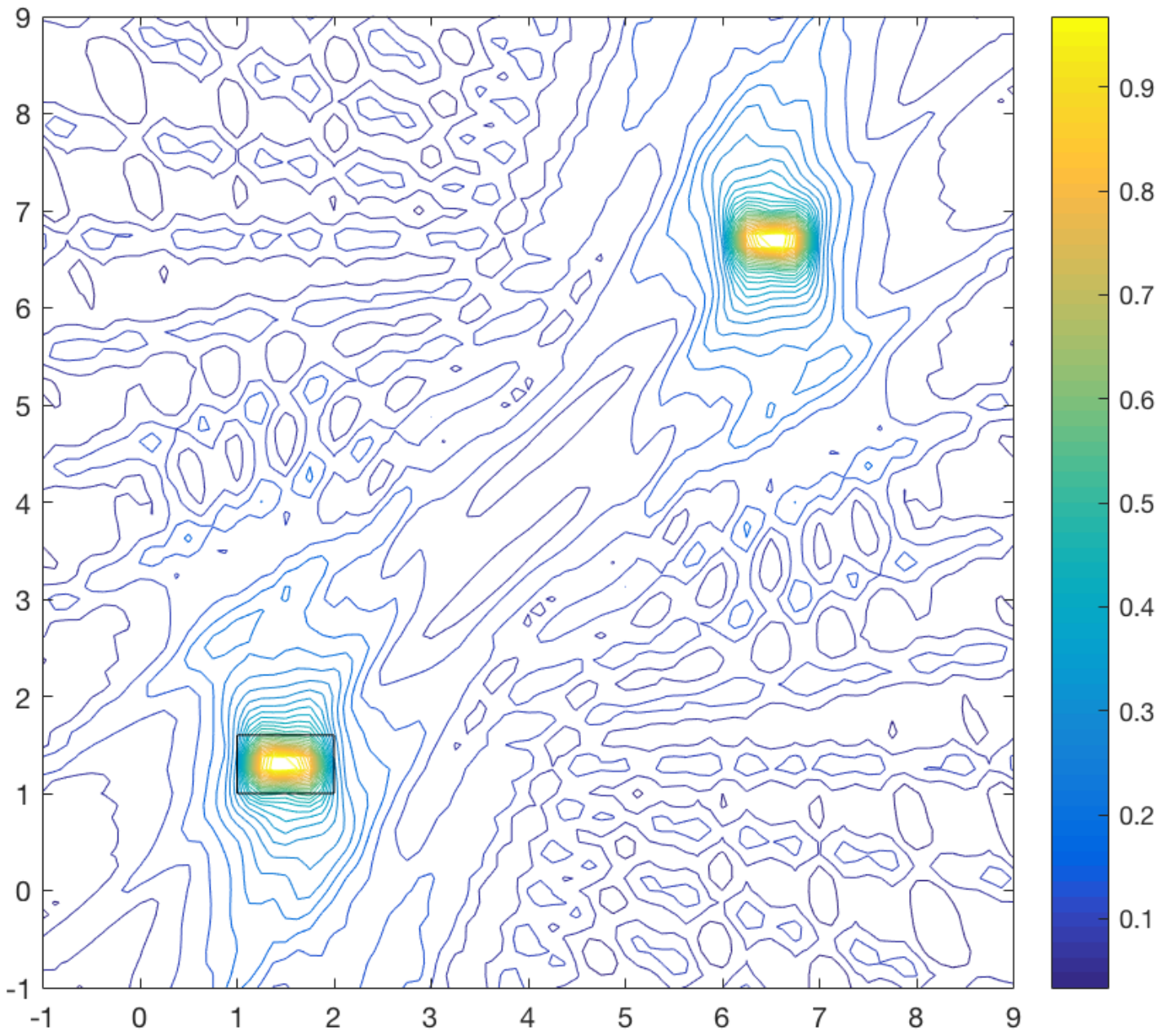}}
  \subfigure[\textbf{$z_0=(12,12)$.}]{
    \includegraphics[width=1.98in]{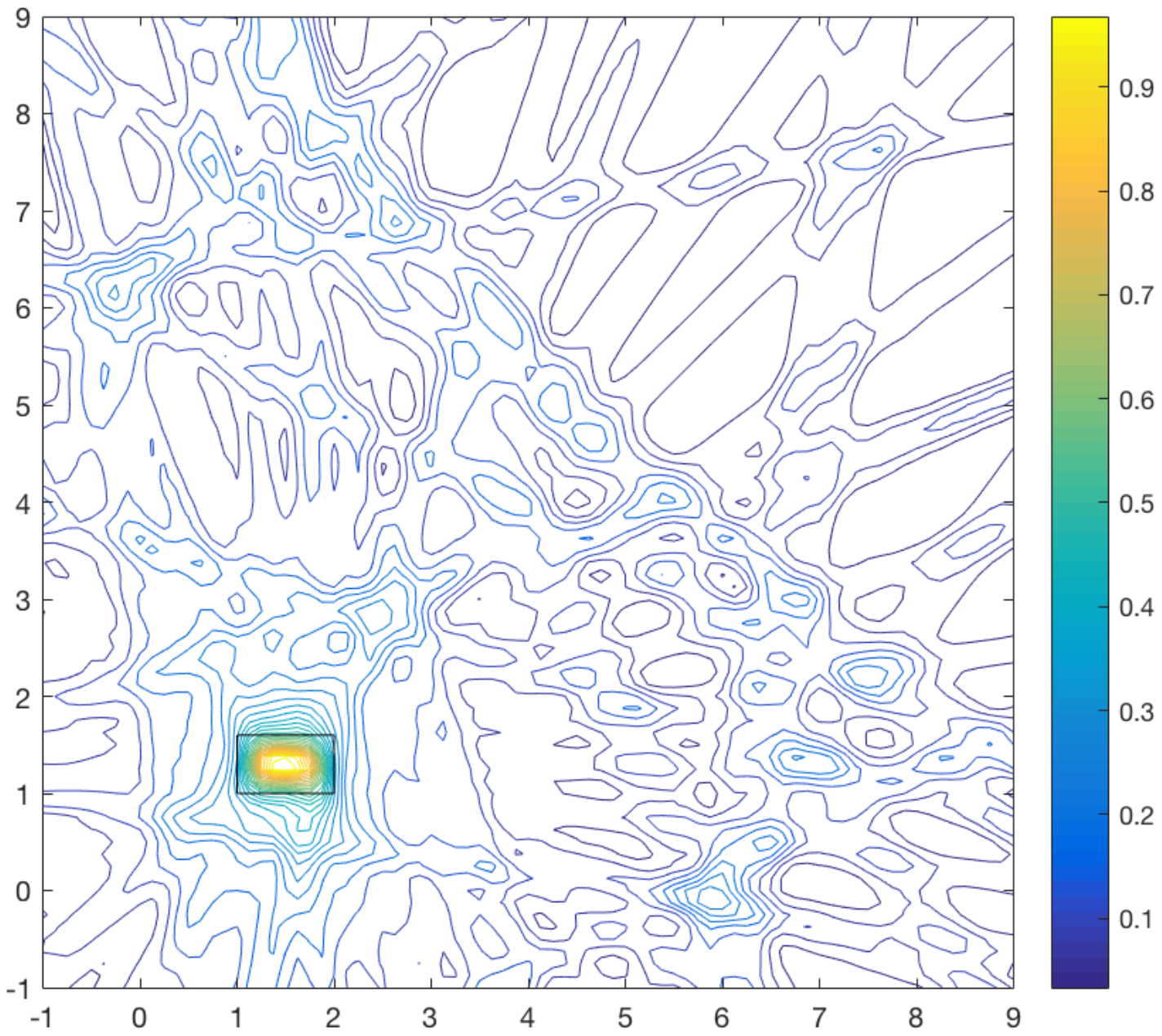}}
\caption{${\bf I^{\Theta}_{z_0}}$ with multiple directions for rectangle when $S = 5$.}
\label{rec3}
\end{figure}

Figure \ref{comb}(a) gives the results for the two source supports with $z_0=(12,12)$.  Figure \ref{Lshaped}(b) gives the results for
the L-shaped domain with $z_0=(12,12)$.

\begin{figure}[htbp]
  \centering
  \subfigure[\textbf{$S=(x^2-y^2+5)k$.}]{
    \includegraphics[width=2in]{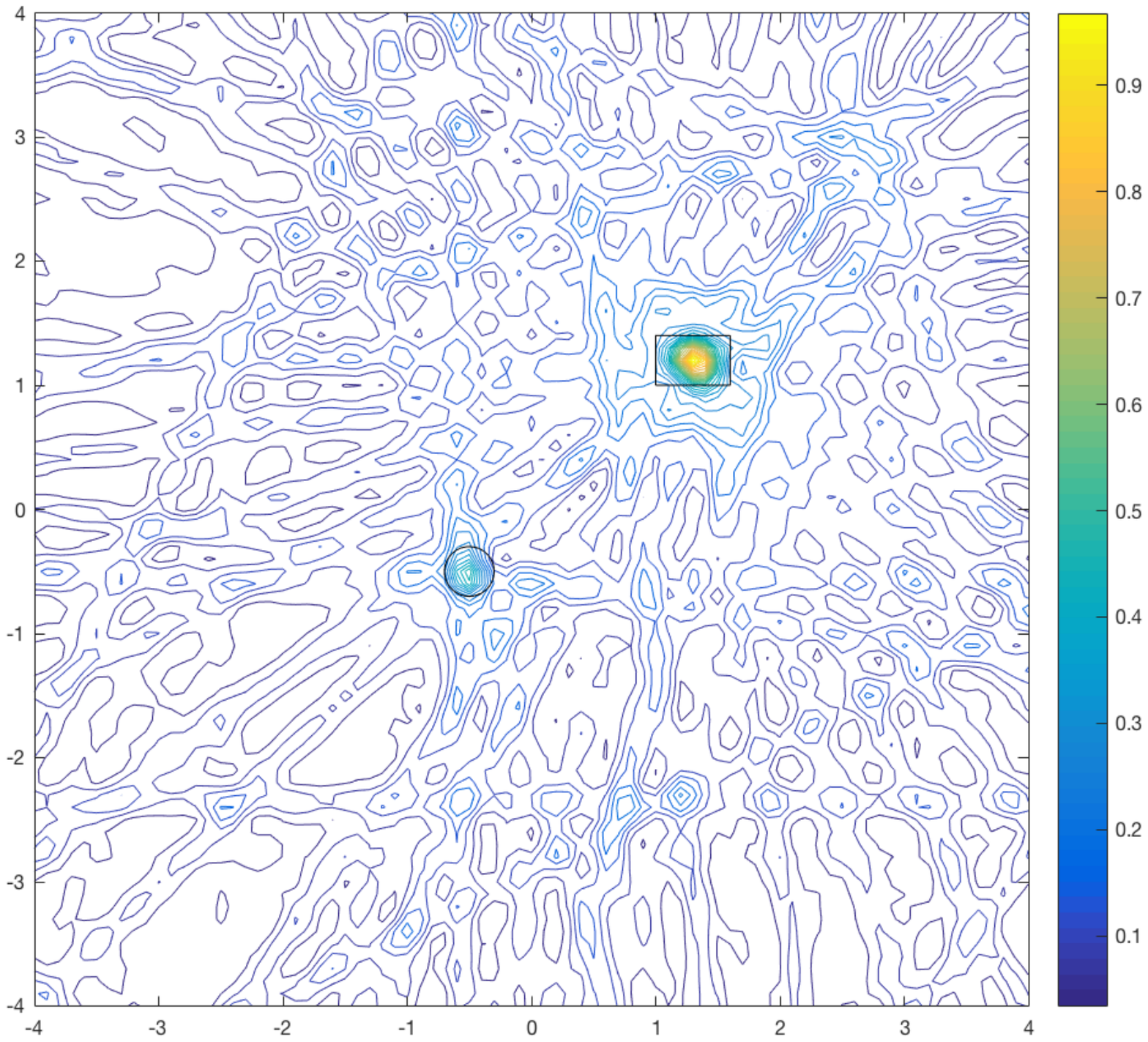}}
  \subfigure[\textbf{$S = 5$.}]{
    \includegraphics[width=2in]{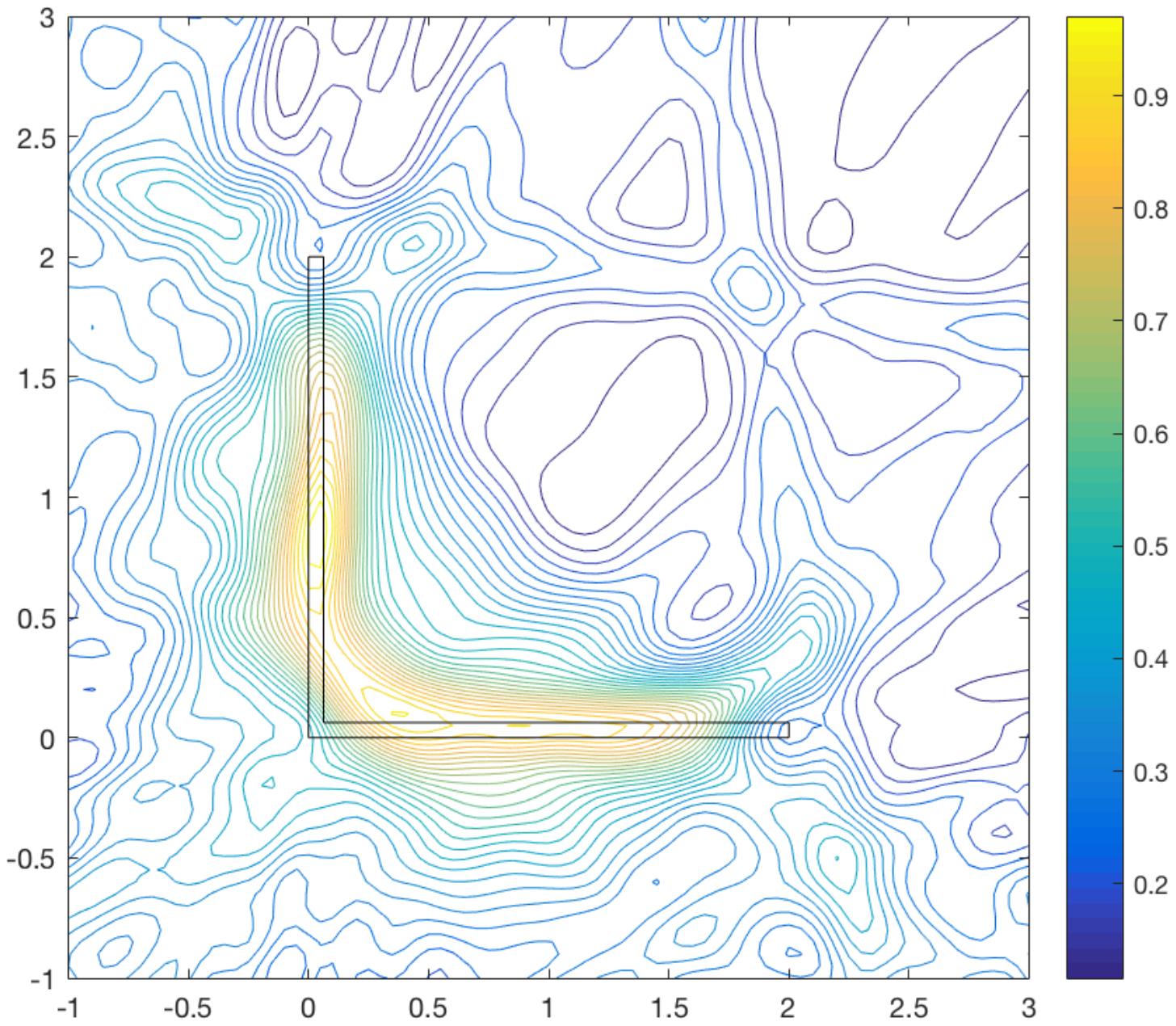}}
\caption{${\bf I^{\Theta}_{z_0}}$ with multiple directions with $z_0=(12,12)$. (a) two sources; (b) $L$-shaped domain.}
\label{comb}
\end{figure}

\subsection{The validity of the phase retrieval scheme}

This example is designed to check the phase retrieval scheme
proposed in the previous section. The underlying scatterer is still the rectangle given by $(1, 2) \times(1,1.6)$ with $S = 5$.
In Fig. \ref{error1} and Fig. \ref{error2}, we compare the phase retrieval data with the exact one, the real part of far field
pattern at a fixed direction $\hx=(1,0)$ is given. We observe that our phase retrieval scheme is very robust to noise.

\begin{figure}[htbp]
  \centering
  \subfigure[\textbf{$10\%$ noise.}]{
    \includegraphics[height=1.3in,width=2in]{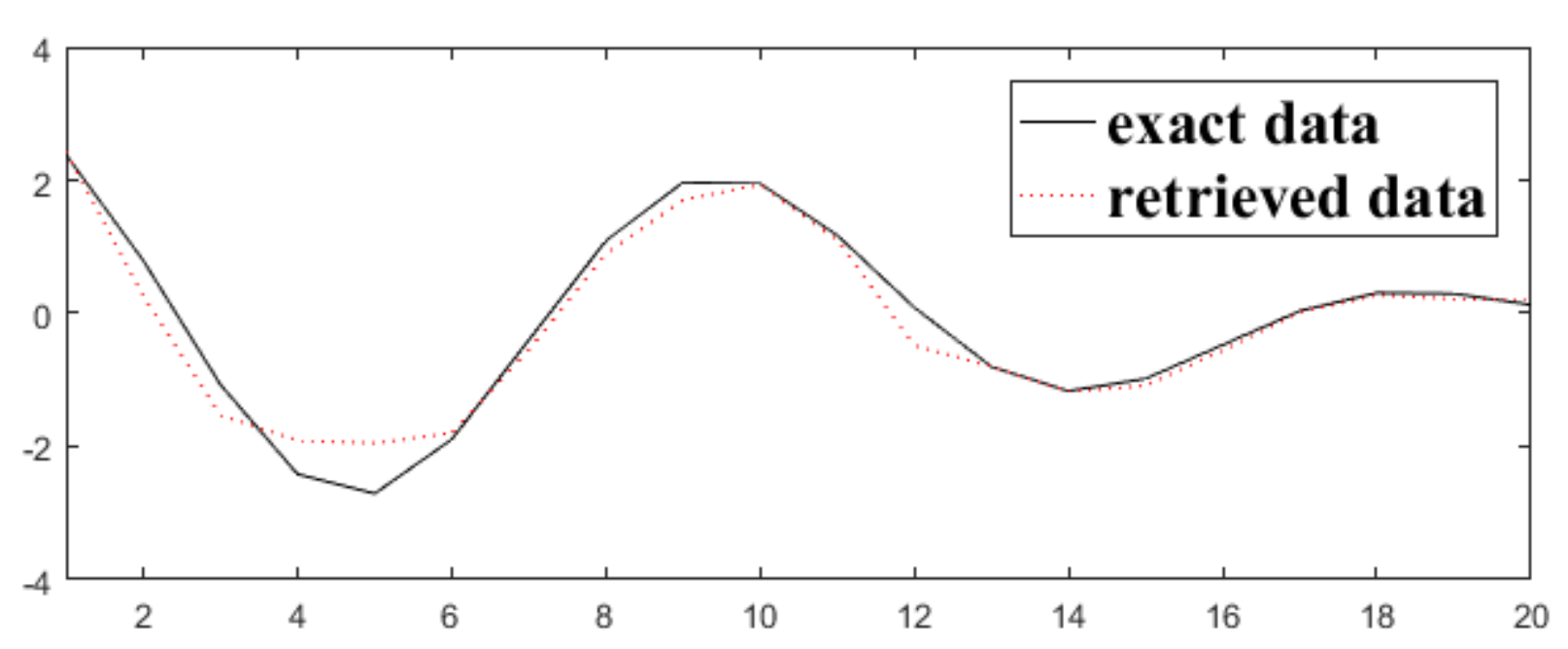}}
  \subfigure[\textbf{$20\%$ noise.}]{
    \includegraphics[height=1.3in,width=2in]{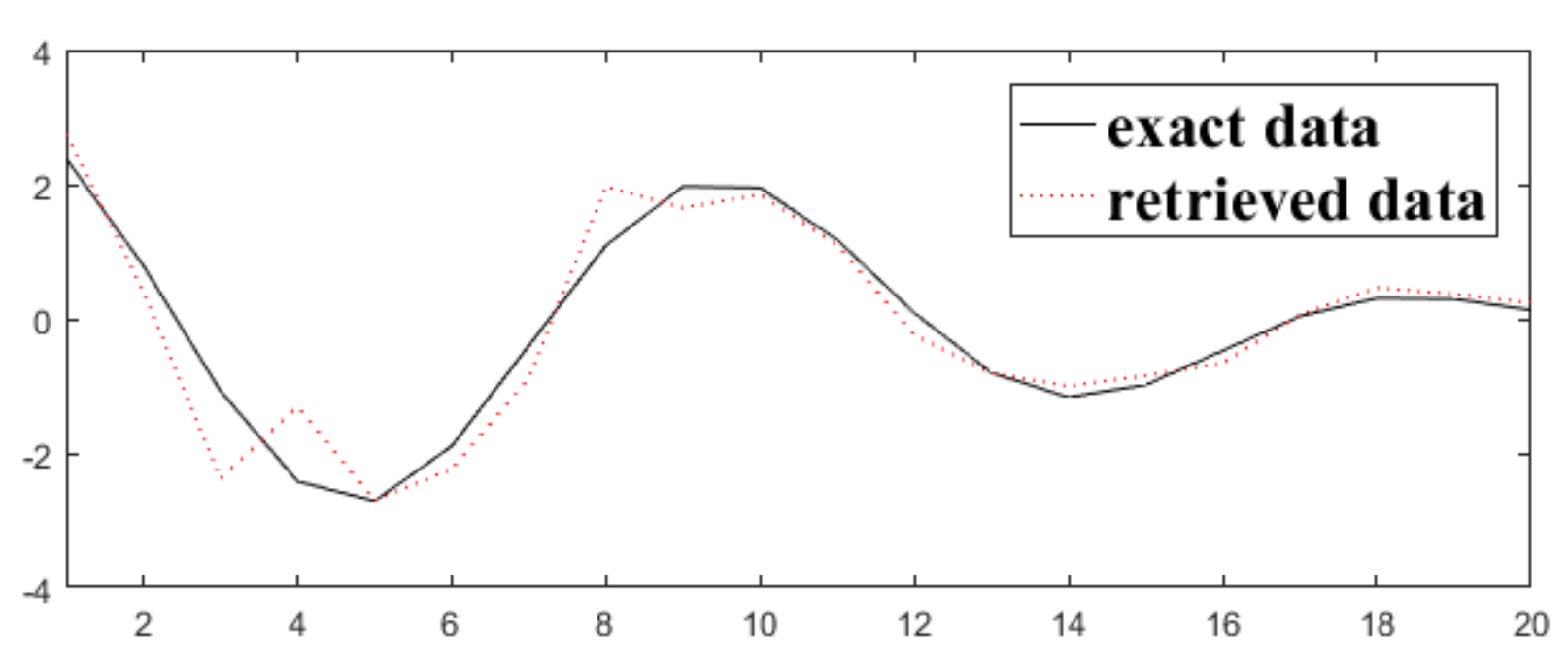}}
  \subfigure[\textbf{$30\%$ noise.}]{
    \includegraphics[height=1.3in,width=2in]{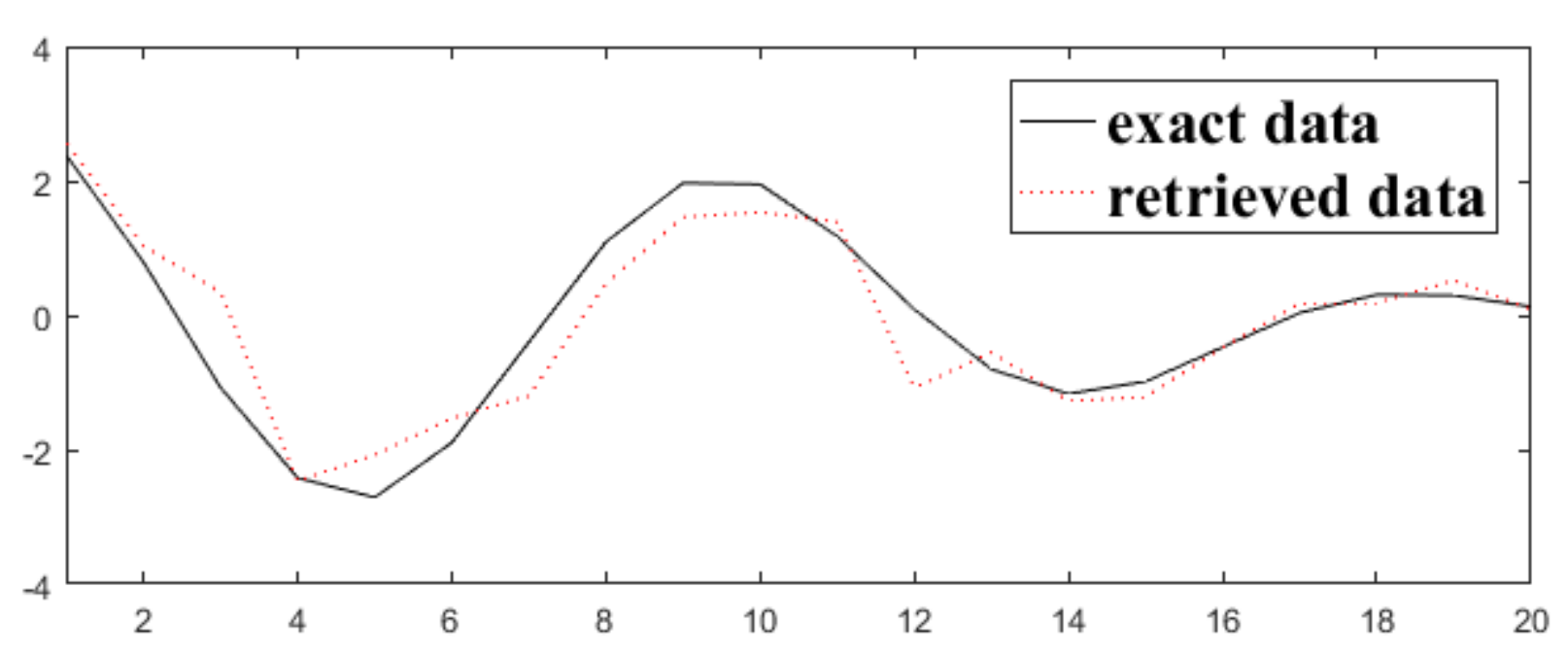}}
\caption{{\bf Example PhaseRetrieval.}\, Phase retrieval for the real part of the far field pattern with relative error at a fixed direction $\hx=(1,0)$.}
\label{error1}
\end{figure}

\begin{figure}[htbp]
  \centering
  \subfigure[\textbf{0.1 noise.}]{
    \includegraphics[height=1.3in,width=2in]{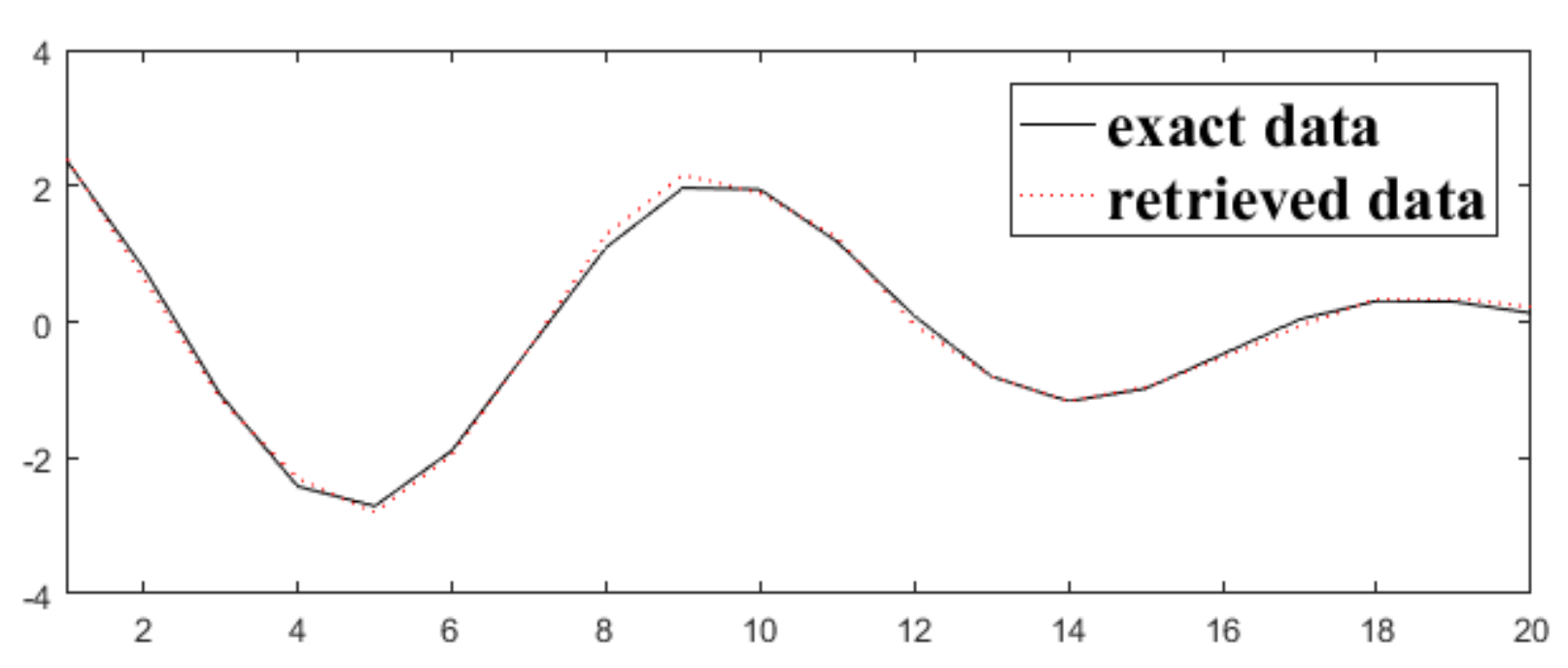}}
  \subfigure[\textbf{0.3 noise.}]{
    \includegraphics[height=1.3in,width=2in]{pic/Errrectabsolute01.pdf}}
  \subfigure[\textbf{0.5 noise.}]{
    \includegraphics[height=1.3in,width=2in]{pic/Errrectabsolute01.pdf}}
\caption{{\bf Example PhaseRetrieval.}\, Phase retrieval for the real part of the far field pattern with absolute error at a fixed direction $\hx=(1,0)$.}
\label{error2}
\end{figure}

\subsection{${\bf I_2}(z)$ with broadband sparse data}
In this part, we use $z_0=(4,4)$ and $\tau=\pm 1, i$. We first use the {\bf Phase Retrieval Scheme} to obtain the phased data, and then reconstruct the source support by the indicator $I_2$.

Figure \ref{PRonetwo} shows the reconstructions of the rectangle given by $(1, 2) \times(1,1.6)$ with one or two observation directions.
Different to Figures \ref{rec2}(b) and \ref{rec3}(b), the false strips disappear now.
We also consider sources with extended supports, the first one is an equilateral triangle with vertices $(-2,0), (1,0),(-1/2,3/2\sqrt{3})$, and the second one is a thin slab given
by $(-2,2)\times (0,0.1)$. Figure \ref{large} shows that both the triangle and the thin slab are reconstructed very well, even $10\%$ noise is considered.

\begin{figure}[htbp]
   \centering
   \subfigure[\textbf{$\hx=(1,0)$.}]{
     \includegraphics[width=2.2in]{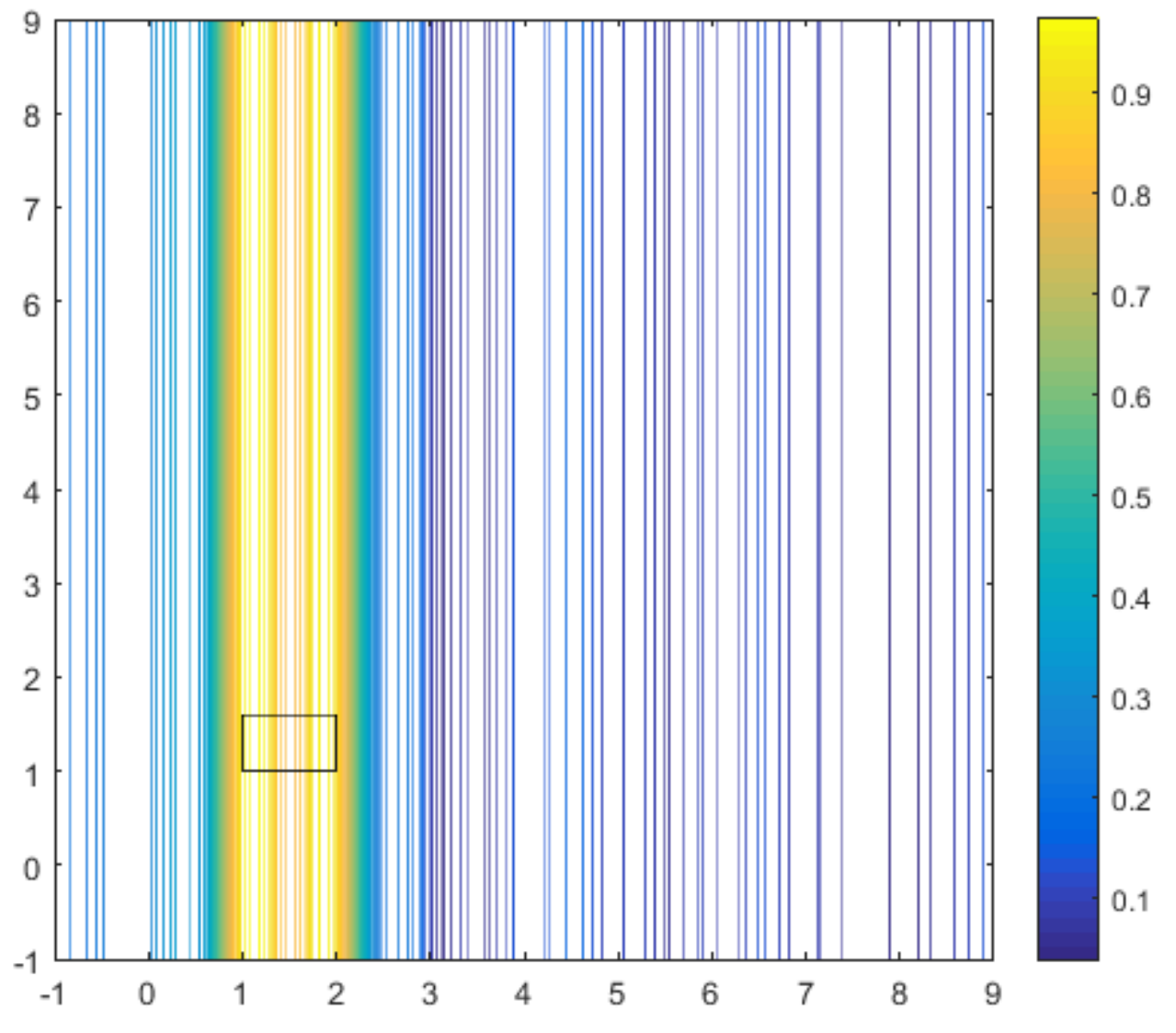}}
   \subfigure[\textbf{$\hx= (1,0), (0,1)$.}]{
     \includegraphics[width=2.2in]{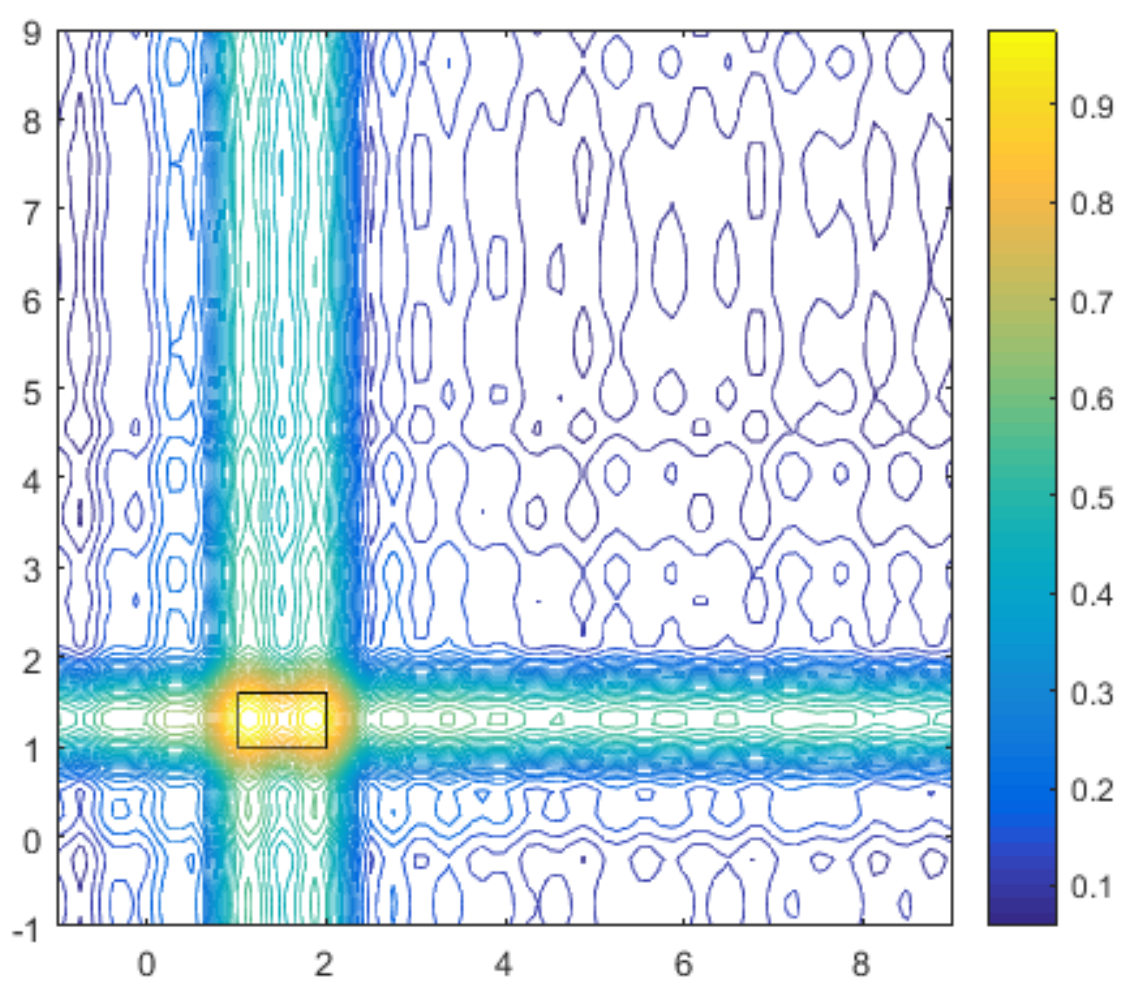}}
 \caption{Reconstructions of the rectangle with one and two observation directions.}
 \label{PRonetwo}
\end{figure}

\begin{figure}[htbp]
  \centering
  \subfigure{
    \includegraphics[height=2in,width=2in]{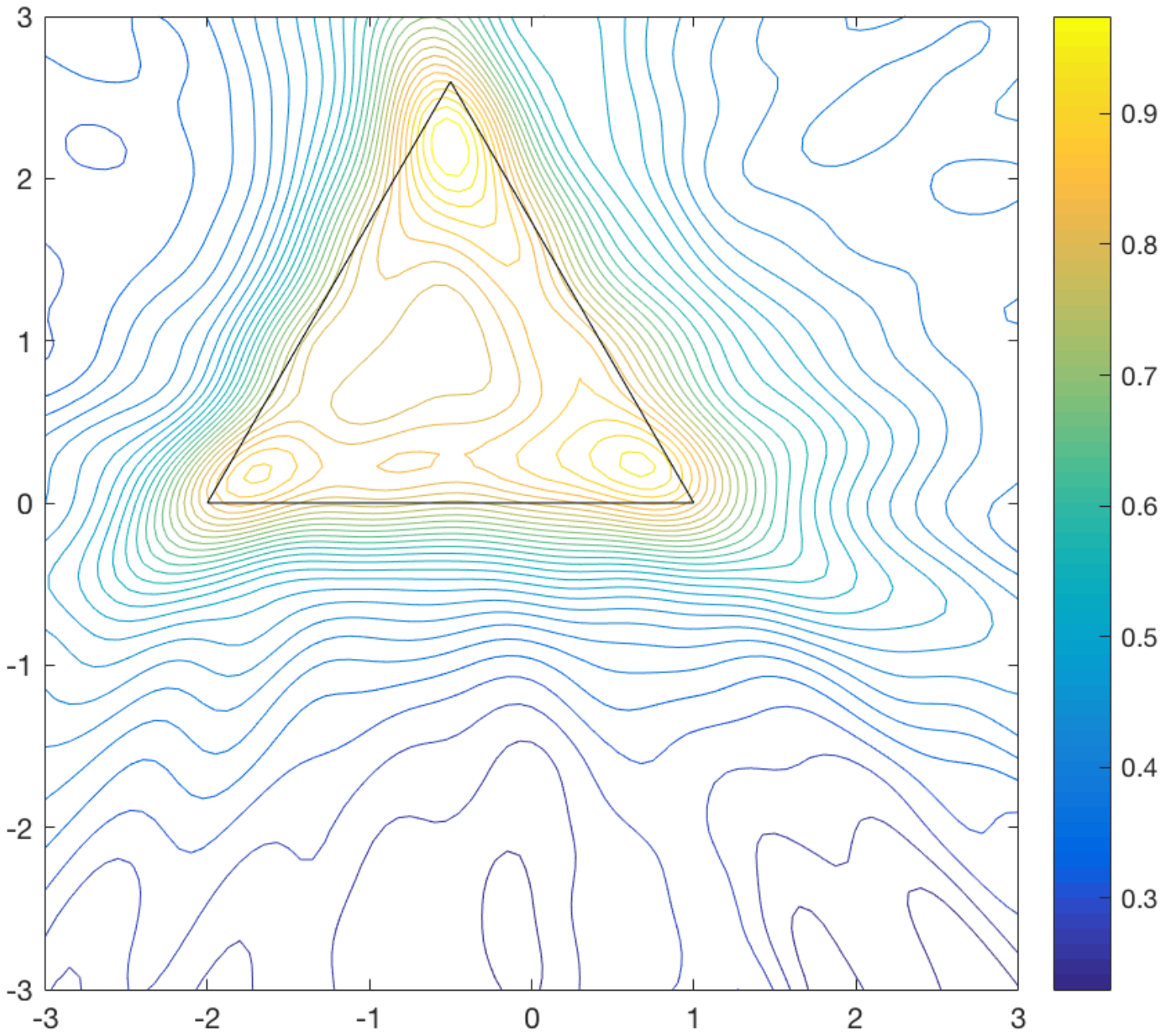}}
  \subfigure{
    \includegraphics[height=2in,width=2in]{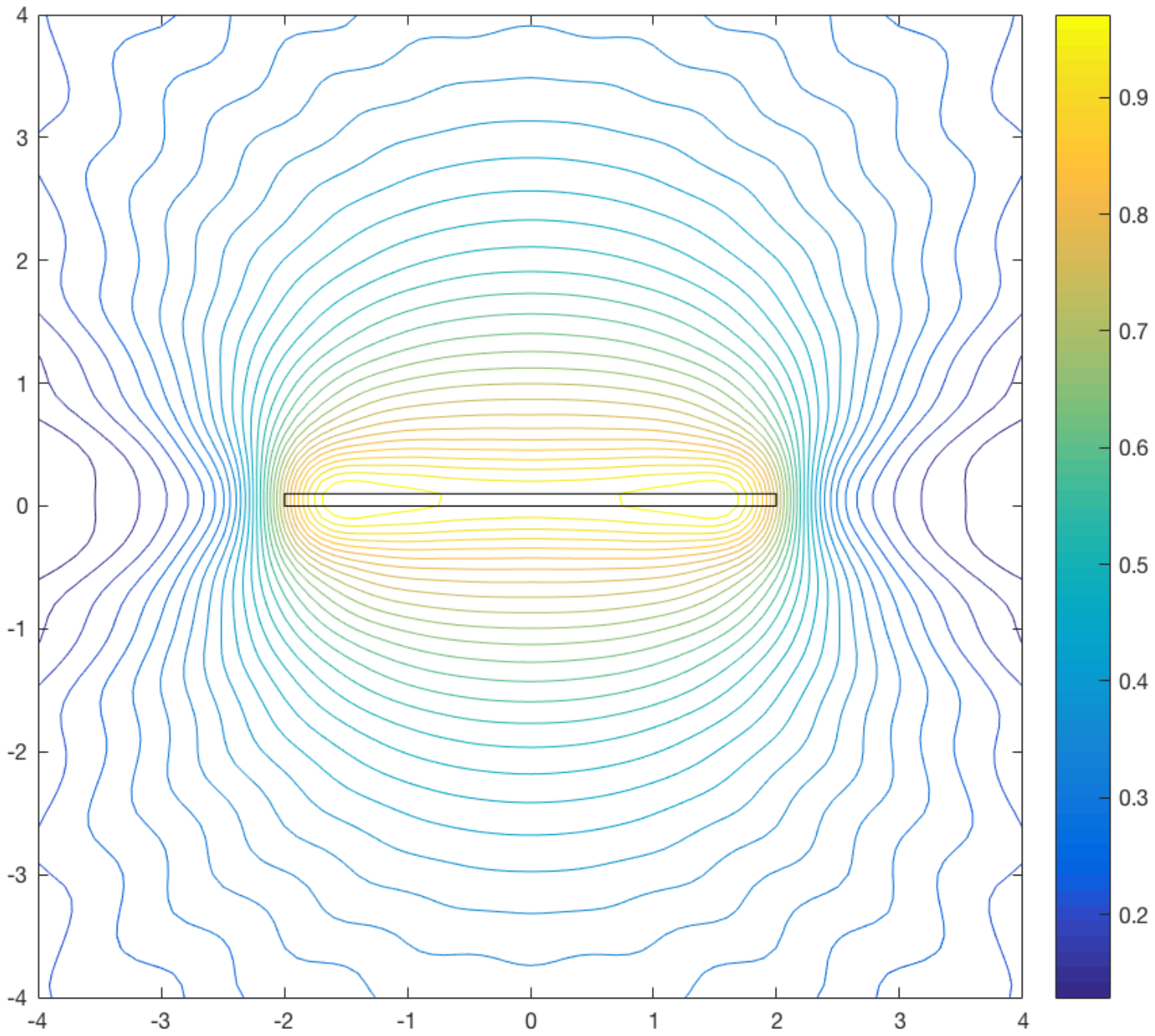}}
\caption{Reconstructions of extended objects with $S=5$ and $10\%$.}
\label{large}
\end{figure}

\section*{Acknowledgement}
The research of X. Ji is partially supported by the NNSF of China with Grant Nos. 11271018 and 91630313,
and National Centre for Mathematics and Interdisciplinary Sciences, CAS.
The research of X. Liu is supported by the NNSF of China under grant 11571355 and the Youth Innovation Promotion Association, CAS.
The research of B. Zhang is partially supported by the NNSF of China under grant 91630309.

\bibliographystyle{SIAM}

\end{document}